\documentclass{amsart}
\usepackage{amscd}
\usepackage{fullpage}
\usepackage{amssymb}
\usepackage[all, knot]{xy}
\usepackage{mathtools}
\usepackage{color}
\xyoption{all}
\xyoption{arc}
\usepackage{hyperref}




\def\ctr{{\mathrm{ctr}}}

\def\del{\partial}

\def\cube#1#2#3#4#5#6#7#8{
& #5 \ar[rr] \ar[dl] \ar@{-}[d] && #6 \ar[dd] \ar[dl] \\
#1 \ar[rr] \ar[dd]  & \ar[d] & #2 \ar[dd] \\
& #7 \ar@{-}[r] \ar[dl] & \ar[r] & #8 \ar[dl] \\
#3 \ar[rr] && #4 \\
}

\def\dg{{\mathrm{dg}}}
\def\coh{\operatorname{coh}}
\def\Cohdg{\on{Coh}_\dg}

\def\smsh{\wedge}

\def\sing{{\text{sing}}}

\def\ker{\operatorname{ker}}
\def\cone{\operatorname{cone}}

\def\cA{\mathcal A}
\def\cB{\mathcal B}
\def\cC{\mathcal C}

\def\cD{\mathcal D}

\def\cK{\mathcal K}

\def\coker{\operatorname{coker}}

\def\RHom{\operatorname{{\mathbb R} Hom}}

\def\sExt{\sExt}

\def\End{\operatorname{End}}

\def\Spec{\operatorname{Spec}}

\def\bu{\cdot}

\def\into{\hookrightarrow}
\def\onto{\twoheadrightarrow}

\def\a{\alpha}
\def\b{\beta}
\def\g{\gamma}

\newcommand{\A}{\mathbb{A}}

\newcommand{\C}{\mathbb{C}}
\newcommand{\Z}{\mathbb{Z}}
\newcommand{\N}{\mathbb{N}}

\numberwithin{equation}{section}

\theoremstyle{plain} 
\newtheorem{thm}[equation]{Theorem}
\newtheorem{thm-conj}[equation]{Theorem-Conjecture}
\newtheorem{defn-conj}[equation]{Definition-Conjecture}

\newtheorem*{introthm*}{Theorem}

\newtheorem{cor}[equation]{Corollary}
\newtheorem{lem}[equation]{Lemma}
\newtheorem{prop}[equation]{Proposition}

\newtheorem{conj}[equation]{Conjecture}

\theoremstyle{definition}
\newtheorem{defn}[equation]{Definition}

\newtheorem{ex}[equation]{Example}

\theoremstyle{remark}
\newtheorem{rem}[equation]{Remark}

\newtheorem{notation}[equation]{Notation}

\newtheorem{terminology}[equation]{Terminology}


\def\D{D}
\def\Perf{\operatorname{Perf}}

\def\coh{\operatorname{coh}}

\newcommand{\Hom}{\operatorname{Hom}}

\newcommand{\xra}[1]{\xrightarrow{#1}}


\newcommand{\id}{\operatorname{id}}

\def\s{\sigma}

\def\tr{\operatorname{tr}}

\def\n{\nabla}

\def\and{ \text{ and } }

\def\can{\mathrm{can}}

\def\op{{\mathrm{op}}}

\def\MC{\operatorname{MC}}

\def\O{\Omega}

\def\on{\operatorname}

\def\bu{\bullet}
\def\wQ{\widetilde{Q}}
\def\wf{\widetilde{f}}

\def\D{\on{D}}
\def\Dbdg{\D^{\on{b}}_{\on{dg}}}
\def\DbdgZ{\D^{\on{b}, Z}_{\on{dg}}}
\def\DbZ{\D^{\on{b}, Z}}

\def\DbdgY{\D^{\on{b}, Y}_{\on{dg}}}
\def\DbdgU{\D^{\on{b}, Z \cap U}_{\on{dg}}(U)}
\def\DbdgV{\D^{\on{b}, Z \cap V}_{\on{dg}}(V)}
\def\DbdgUV{\D^{\on{b}, Z \cap U \cap V}_{\on{dg}}(U \cap V)}
\def\Perfdg{\Perf_{\on{dg}}}
\def\sing{\on{sing}}

\def\dR{\on{dR}}
\def\om{\omega}
\def\fiber{\on{fiber}}
\def\MR#1{}


\title{D\'evissage for periodic cyclic homology of complete intersections}
\author{Michael K. Brown}
\author{Mark E. Walker}
\date{}

\vspace{18mm} \setcounter{page}{1} \thispagestyle{empty}

\thanks{The authors were partially supported by NSF grants DMS-2302373 and DMS-2200732.}

\makeatletter
\@namedef{subjclassname@2020}{%
  \textup{2020} Mathematics Subject Classification}
\makeatother
\subjclass[2020]{13D03, 13D09, 14F08, 19D55}

\begin{document}
\maketitle
\begin{abstract}
We prove that the d\'evissage property holds for periodic cyclic homology for a local complete intersection embedding into a smooth scheme. As a consequence, we show that the
complexified topological Chern character maps for the bounded derived category and singularity category of a local complete intersection are isomorphisms, proving new
cases of the Lattice Conjecture in noncommutative Hodge theory. 
\end{abstract}

\tableofcontents

\section{Introduction}

Given a closed embedding $i : Z \into X$ of noetherian schemes, one has a pushforward functor
$$i_* : \coh(Z) \to \coh^Z(X)$$
 from coherent sheaves on $Z$ to coherent sheaves on $X$ supported in $Z$. While this functor is far from being an equivalence in general, it is a fundamental result of Quillen \cite{quillen} that $i_*$ induces an isomorphism on $G$-theory; that is, $G$-theory has the \emph{d\'evissage property}. In more detail: writing $G_*(Z)$ and $G_*^Z(X)$ for the algebraic $K$-theory groups of $\coh(Z)$ and $\coh^Z(X)$, the induced map
$
i_* : G_*(Z) \to G_*^Z(X)
$
is an isomorphism. One can recast this result in the language of differential graded (dg) categories in the following way: given appropriate dg-enhancements $\Dbdg(Z)$ and $\DbdgZ(X)$ of the bounded derived categories of $\coh(Z)$ and $\coh^Z(X)$, the induced map
$
i_* : K_*(\Dbdg(Z)) \to K_*(\DbdgZ(X))
$
on algebraic $K$-theory is an isomorphism. 

Versions of the d\'evissage property are now known to be enjoyed by a host of invariants. For instance, a result of Tabuada-Van den Bergh \cite[Theorem 1.8]{TVdB} states that, given a closed immersion $i : Z \to X$ of \emph{smooth} schemes over a field $k$ and any localizing $\A^1$-homotopy invariant $E$ of dg-categories, the map
\begin{equation}
\label{E}
i_* : E_*(\Dbdg(Z)) \to E_*(\DbdgZ(X))
\end{equation}
is an isomorphism. In particular, \eqref{E} is an isomorphism when $E = HP$, the periodic cyclic homology functor.\footnote{This special case admits a more elementary proof via the
  Hochschild-Kostant-Rosenberg formula; see Remark \ref{tabuada}.} The main goal of this paper is to establish the d\'evissage property for periodic cyclic homology in the case of
an embedding of a \emph{complete intersection} into a smooth scheme, with a view toward proving new cases of the Lattice Conjecture in noncommutative Hodge theory.

To state our result precisely, we make the following definition: a closed embedding $Z \into X$ of noetherian schemes is a \emph{local complete intersection}, or \emph{lci}, if 
there is an affine open cover $U_i = \Spec(Q_i)$ of $X$ such that each $U_i \cap Z$ is equal to $\Spec(Q_i / I_i)$ for some ideal $I_i \subseteq Q_i$ that is generated by a regular
sequence. The following is our main result: 

\begin{thm}
\label{devissage}
Let $k$ be a field of characteristic $0$, $X$ a smooth scheme over $k$, and $i : Z \into X$ an lci closed embedding.
The map $i_*: \Dbdg(Z) \to \DbdgZ(X)$ induces a quasi-isomorphism on periodic cyclic complexes:
$$
HP(\Dbdg(Z)) \xra{\simeq} HP(\DbdgZ(X)).
$$
\end{thm}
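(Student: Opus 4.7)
The plan is to compute both sides of the map as the algebraic de Rham cohomology of $Z$, and to verify that $i_*$ intertwines the two descriptions. Because $HP$ is a localizing invariant of dg-categories, it satisfies Zariski (in fact Nisnevich) descent; both $U \mapsto \Dbdg(Z \cap U)$ and $U \mapsto \D^{\on{b}, Z \cap U}_{\on{dg}}(U)$ sheafify well on $X$, and $i_*$ is compatible with restriction. A Mayer--Vietoris argument therefore reduces the theorem to the affine case: $X = \Spec Q$ with $Q$ smooth over $k$ and $Z = \Spec R$ with $R = Q/(\vf)$ for some regular sequence $\vf = (f_1, \ldots, f_c)$.

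For the target, since $Q$ has finite global dimension, $\DbdgZ(X) = \Perf^Z(Q)$. The HKR theorem for $\Perf(Q)$, together with the localization triangle at $Z$ in $HP$, identifies $HP(\Perf^Z(Q))$ with algebraic de Rham cohomology of $Q$ with supports in $Z$; by the standard comparison for algebraic de Rham cohomology of a subscheme embedded in a smooth ambient, this is $H^*_{\dR}(Z/k)((u))$. For the source, the HKR theorem in characteristic zero identifies $HH_*(R)$ with $\operatorname{Sym}_R(\bL_{R/k}[1])$, and Connes' $B$-operator recovers the derived de Rham differential on this presentation, so $HP(\Dbdg(R))$ computes the derived de Rham cohomology of $R$ over $k$. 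Since $R$ is lci, derived de Rham cohomology agrees with classical algebraic de Rham cohomology (Illusie, Bhatt), giving $HP(\Dbdg(R)) \simeq H^*_{\dR}(Z/k)((u))$ as well.

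Finally, one checks that $i_*$ on $HP$ corresponds to the canonical isomorphism between these two presentations of $H^*_{\dR}(Z/k)((u))$; this is a naturality (trace) argument at the level of Hochschild chains, compatible with $B$. The principal obstacle is precisely this compatibility. The HKR-type description of $HP$ for the dg-category $\Dbdg(R)$ attached to a singular lci algebra requires lifting HKR to a map of mixed complexes so that Connes' $B$ is identified with the derived de Rham differential; and the resulting identification has to intertwine $i_*$ with the support comparison on the smooth ambient $X$. Arranging these two intertwinings simultaneously, and handling the subtle sign/convention matches between the Koszul-resolution computation of $i_*$ and the de Rham differential on the cotangent complex, is where I would expect the technical heart of the paper to lie.
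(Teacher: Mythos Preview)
The Mayer--Vietoris reduction to the affine case is correct and matches the paper's first step. The gap is on the source side: you write that HKR identifies $HH_*(R)$ with $\on{Sym}_R(\bL_{R/k}[1])$ and conclude that $HP(\Dbdg(R))$ is derived de Rham cohomology. But the HKR/cotangent-complex description computes $HH(\Perf(R))$, not $HH(\Dbdg(R))$; for singular $R$ these differ by the Hochschild homology of the singularity category, which is typically nonzero. There is no off-the-shelf HKR formula expressing $HH(\Dbdg(R))$ in terms of $\bL_{R/k}$, so your identification of the source is unjustified. (It is true a posteriori that $HP(\Dbdg(R)) \simeq H^*_{\dR}(Z/k)((u))$, but that is essentially the content of the theorem.)

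The paper circumvents this by never attempting an intrinsic HKR description of $HP(\Dbdg(R))$. Since $\vf$ is regular, $\Dbdg(R) \simeq \Dbdg(K)$ for $K$ the Koszul complex, and Koszul duality identifies the latter with the matrix factorization category $mf(Q[t_1,\dots,t_c],\, \sum_i f_i t_i)$. That category \emph{does} admit an explicit HKR model, the twisted de Rham complex $(\Omega^\bullet_{Q[t_1,\dots,t_c]/k},\, d(\sum_i f_i t_i),\, d)$. The proof then reduces to an explicit computation showing that a certain square of such twisted de Rham complexes is homotopy cartesian, followed by induction on $c$. So the technical heart is not verifying compatibility of $i_*$ with two pre-existing HKR isomorphisms, but rather producing an HKR-type model for the source in the first place---which is what the detour through matrix factorizations provides.
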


In fact, Theorem \ref{devissage} can be extended slightly; see Corollary \ref{generalization}. The $\on{char}(k) = 0$ assumption is necessary to invoke a version of the
Hochschild-Kostant-Rosenberg 
Theorem (see Theorem~\ref{HKR}) and also Lemma~\ref{philemma}. To prove Theorem~\ref{devissage}, we use a Mayer-Vietoris argument to reduce to the affine case. We then proceed via an explicit
calculation using versions of Koszul duality and the Hochschild-Kostant-Rosenberg formula involving matrix factorization categories. While Theorem~\ref{devissage} extends the
aforementioned result \cite[Theorem 1.8]{TVdB} of Tabuada-Van den Bergh, their argument does not adapt to our setting, and so our proof is completely different from theirs; see
Remark~\ref{tabuada} for details.

\subsection*{Applications}

As a first application, we apply Theorem~\ref{devissage} to prove new cases of Blanc's Lattice Conjecture:

\begin{conj}[\cite{blanc} Conjecture 1.7]
\label{latticeconj}
Given a smooth and proper $\C$-linear dg-category $\cC$, the topological Chern character map $K_*^{\on{top}}(\cC) \otimes_\Z \C \to HP_*(\cC)$ is an isomorphism. 
\end{conj}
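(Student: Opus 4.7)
The conjecture remains open in full generality, so my plan is to focus on the new cases announced in the abstract, namely $\cC = \Dbdg(Z)$ and $\cC = \Dsg(Z)$ for $Z$ a proper local complete intersection over $\C$, with Theorem~\ref{devissage} playing the central role.

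I would take as input two known facts. First, the conjecture is known for $\cC = \Dbdg(X)$ when $X$ is a smooth projective variety over $\C$ (Blanc): both sides then compute a $2$-periodicization of $H^*(X^{\on{an}};\C)$ and the Chern character agrees with the classical one via HKR. Second, both $K_*^{\on{top}}(-) \ot_\Z \C$ and $HP_*(-)$ are localizing invariants on dg-categories, and the topological Chern character is a natural transformation between them, so each sends Verdier quotient sequences to long exact sequences compatibly.

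Fix an lci embedding $i : Z \into X$ with $X$ smooth projective. The core step is to analyze the commutative square
\[
\begin{CD}
K_*^{\on{top}}(\Dbdg(Z)) \ot \C @>\on{ch}>> HP_*(\Dbdg(Z)) \\
@Vi_*VV @Vi_*VV \\
K_*^{\on{top}}(\DbdgZ(X)) \ot \C @>\on{ch}>> HP_*(\DbdgZ(X))
\end{CD}
\]
Theorem~\ref{devissage} makes the right vertical arrow an isomorphism. The bottom horizontal arrow can be shown to be an isomorphism by applying the Blanc localization triangle to $\DbdgZ(X) \into \Dbdg(X) \to \Dbdg(X \setminus Z)$ for both invariants, using the $5$-lemma together with the known case for $\Dbdg(X)$ (smooth projective) and the Chern character isomorphism for smooth open varieties such as $X \setminus Z$ (a standard consequence of HKR plus a compactification argument). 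Granting these two inputs, the top horizontal arrow is an isomorphism if and only if the left vertical arrow is. For the singularity category case, one applies the analogous strategy to $\Dsg(Z) = \Dbdg(Z)/\Perfdg(Z)$, exploiting the Verdier localization triangles for $K^{\on{top}} \ot \C$ and $HP$ and the Chern character isomorphism on $\Perfdg(Z)$.

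The main obstacle is showing that the devissage map $i_* : K_*^{\on{top}}(\Dbdg(Z)) \ot \C \to K_*^{\on{top}}(\DbdgZ(X)) \ot \C$ is an isomorphism, i.e.\ a topological K-theoretic analogue of Theorem~\ref{devissage}. That theorem is proved via Mayer--Vietoris plus an explicit HKR / Koszul-duality computation available in characteristic zero; these techniques are special to $HP$ and do not transport directly to topological K-theory, which is defined via Blanc's topological realization of dg-categories and lacks an HKR-style formula. My approach would be to imitate the proof of Theorem~\ref{devissage} at the level of topological realizations: reduce via Mayer--Vietoris to the affine case, identify the relevant topological realizations with those of matrix factorization categories, and compute using Kn\"orrer-periodicity-style simplifications. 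Failing a direct topological argument, a fallback is to promote Theorem~\ref{devissage} to a statement about localizing motives and then deduce the $K^{\on{top}} \ot \C$ isomorphism by formality of the complexified Chern character on the relevant motivic piece.
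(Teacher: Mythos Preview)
Your overall architecture matches the paper's: set up the square comparing the Chern character before and after pushforward, use Theorem~\ref{devissage} for the right vertical, and use the localization sequence for $\DbdgZ(X) \to \Dbdg(X) \to \Dbdg(X\setminus Z)$ together with Blanc's result for smooth finite-type schemes to handle the bottom horizontal. This is exactly Proposition~\ref{latticeprop}.

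The gap is your ``main obstacle.'' You treat d\'evissage for complexified topological $K$-theory,
\[
i_* : K_*^{\on{top}}(\Dbdg(Z)) \otimes \C \xra{\ \cong\ } K_*^{\on{top}}(\DbdgZ(X)) \otimes \C,
\]
as an open problem requiring a topological imitation of the HKR/Koszul-duality argument or a motivic enhancement. In fact this is already known, with no lci or properness hypothesis: it is \cite[Example~2.3]{HLP}, and the paper simply cites it (see the proof of Proposition~\ref{latticeprop}(2)). Once you plug that in, your square immediately yields the Lattice Conjecture for $\Dbdg(Z)$, and the singularity-category case follows from the localization triangle $\Perfdg(Z) \to \Dbdg(Z) \to \D^{\sing}_{\on{dg}}(Z)$ exactly as you outline. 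So there is no need for either of your proposed workarounds.

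Two smaller points. You restrict to $Z$ proper and $X$ smooth projective, but neither is needed: Blanc's result covers $\Perfdg$ of any separated finite-type $\C$-scheme, so $X\setminus Z$ is handled directly without a compactification argument, and the paper's statement (Theorem~\ref{latticeci}) only assumes $X$ noetherian and locally lci-embeddable in something smooth. Also, to pass from the affine case to the global one, the paper uses the Mayer--Vietoris property (Corollary~\ref{MVcor}) for the localizing invariant $E = \fiber(ch)$ rather than assuming a single global embedding $Z \into X$; you should do the same.
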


The motivation for the Lattice Conjecture is that topological $K$-theory is believed to provide the rational lattice in the (conjectural) noncommutative Hodge structure on the periodic cyclic homology of any smooth and proper dg-category, in the framework of Katzarkov-Kontsevich-Pantev's noncommutative Hodge theory~\cite{KKP}. While Conjecture \ref{latticeconj} involves smooth and proper dg-categories, it is known to hold in many cases beyond this purview. More precisely, the Lattice Conjecture is known for the following dg-categories; in what follows, $\Perfdg( - )$ denotes the dg-category of perfect complexes on $(- )$.
\begin{enumerate}
\item $\Perfdg(X)$, where $X$ is a separated, finite type scheme over $\C$ \cite{blanc} (see also \cite[Theorem 1.1]{kon} for a generalization of this result to derived schemes);
\item $\Perfdg(X)$, for $X$ a smooth Deligne-Mumford stack \cite[Corollary 2.19]{HLP};
\item a connected, proper dg-algebra $A$ \cite[Theorem 1.1]{kon};
\item a connected dg-algebra $A$ such that $H_0(A)$ is a nilpotent extension of a commutative $\C$-algebra of finite type \cite[Theorem 1.1]{kon}.
\end{enumerate}

We prove the following:

\begin{thm}
\label{latticeci}
Let $X$ be a noetherian $\C$-scheme such that every point has an open neighborhood that admits an lci closed embedding into a smooth $\C$-scheme. The Lattice Conjecture holds for both the dg-bounded derived category $\Dbdg(X)$ and the dg-singularity category $\D^{\sing}_{\on{dg}}(X)$ of $X$. 
\end{thm}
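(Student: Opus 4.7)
The plan is to first establish the Lattice Conjecture for $\Dbdg(X)$; the case of $\D^{\sing}_{\on{dg}}(X)$ then follows by applying the two-out-of-three property to the Verdier localization sequence
\[
\Perfdg(X) \longrightarrow \Dbdg(X) \longrightarrow \D^{\sing}_{\on{dg}}(X),
\]
together with the Lattice Conjecture for $\Perfdg(X)$, which is supplied by Blanc's theorem (case~(1) of the introduction): the hypothesis forces $X$ to be noetherian and of finite type over $\C$.

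To handle $\Dbdg(X)$, I would first reduce via Zariski Mayer-Vietoris on the cover provided by the hypothesis to the case in which $X$ itself admits a single lci closed embedding $i \colon X \into Y$ into a smooth $\C$-scheme $Y$. Both $HP$ and $K^{\on{top}}$ are localizing invariants satisfying Zariski descent (for $K^{\on{top}}$, by Blanc), and the topological Chern character is a natural transformation between them, so the five lemma legitimizes this reduction.

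With such an embedding in place, the proof proceeds by a diagram chase in the commutative square of topological Chern characters associated to $i_*$. The Lattice Conjecture holds for $\D^{\on{b}, X}_{\on{dg}}(Y)$ because, since $Y$ is regular, one has $\D^{\on{b}, X}_{\on{dg}}(Y) = \Perfdg^X(Y)$, and the localization sequence $\Perfdg^X(Y) \to \Perfdg(Y) \to \Perfdg(Y \setminus X)$ combined with Blanc's case~(1) applied to the smooth schemes $Y$ and $Y \setminus X$ yields the Chern character equivalence for $\Perfdg^X(Y)$. Theorem~\ref{devissage} then supplies the equivalence $HP(\Dbdg(X)) \xra{\simeq} HP(\D^{\on{b}, X}_{\on{dg}}(Y))$. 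These two inputs together reduce the Lattice Conjecture for $\Dbdg(X)$ to the statement that $i_*$ induces an equivalence on $K^{\on{top}}(-) \otimes_\Z \C$.

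The main obstacle is thus a topological $K$-theory analogue of devissage along $i$. I would approach this by leveraging Quillen's classical devissage, which furnishes an equivalence $K(\Dbdg(X)) \simeq K(\D^{\on{b}, X}_{\on{dg}}(Y))$ via the identifications $K(\Dbdg(X)) \simeq G(X)$ and $K(\D^{\on{b}, X}_{\on{dg}}(Y)) \simeq G^X(Y)$, combined with the observation that any base change of $i$ along a finite-type $\C$-algebra remains an lci embedding, so Quillen's devissage applies uniformly in families. Since Blanc's $K^{\on{top}}$ is assembled from the algebraic $K$-theories of such base changes via semi-topologization, one expects the resulting $K^{\on{top}}$-equivalence to follow; carrying out this propagation carefully is the most delicate step of the argument.
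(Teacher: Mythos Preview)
Your outline matches the paper's proof closely: reduce by Mayer--Vietoris to a single lci embedding $i\colon X\hookrightarrow Y$ with $Y$ smooth, establish the Lattice Conjecture for $\D^{\on{b},X}_{\on{dg}}(Y)$ via the localization sequence and Blanc's theorem for $Y$ and $Y\setminus X$, and then combine Theorem~\ref{devissage} (for $HP$) with a d\'evissage statement for $K^{\on{top}}$ to conclude for $\Dbdg(X)$; the singularity category then follows from the defining sequence. The paper streamlines this by working throughout with the localizing invariant $E=\fiber(ch\colon K^{\on{top}}_\C\to HP)$, so that Mayer--Vietoris (Corollary~\ref{MVcor}) and the localization sequence are applied once to $E$ rather than separately to $HP$ and $K^{\on{top}}$ with a five-lemma glue.

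The one place where you leave a genuine gap is the $K^{\on{top}}$-d\'evissage step, which you flag as ``the most delicate step'' and only sketch. In fact this is already in the literature: the paper simply cites \cite[Example 2.3]{HLP} for the fact that $K^{\on{top}}_\C(\Dbdg(Z))\to K^{\on{top}}_\C(\DbdgZ(Y))$ is an equivalence for any closed embedding $Z\hookrightarrow Y$. Two remarks on your sketch of this step: first, the lci hypothesis plays no role here, since Quillen's d\'evissage for $G$-theory requires only a closed embedding of noetherian schemes, so your appeal to preservation of the lci property under base change is unnecessary (though it happens to be correct, as base change over $\Spec\C$ is flat); second, the propagation from algebraic $K$-theory to $K^{\on{top}}$ via the semi-topological construction is indeed how Halpern-Leistner--Pomerleano argue, so your instinct is right, but you should just cite their result rather than redo it.
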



Theorem \ref{latticeci} opens the door to studying noncommutative Hodge structures (in the sense of \cite{KKP}) of singularity categories of complete intersections, building on the robust literature on Hodge-theoretic properties of such singularity categories \cite{BD, BFK1, BFK2, BW, BW1,BW2, CT, dyckerhoff, efimov2,  HLP, KP, kim1,PV,  segal, shk2, shklyarov1}. We will explore this in detail in the case of singularity categories of hypersurfaces in a forthcoming paper.

\medskip
As a second application, we use the direct calculations in our proof of Theorem~\ref{devissage} to explicitly compute the boundary map in a certain localization sequence on periodic cyclic homology. In more detail: let $Q$ be an essentially smooth $k$-algebra, $f \in Q$ a non-zero-divisor, and $R = Q / f$. Since periodic cyclic homology is a localizing invariant of dg-categories, Theorem~\ref{devissage} implies that there is a long exact sequence
$$
\cdots \to HP_j(\Dbdg(R)) \to HP_j(Q) \to HP_j(Q[1/f]) \xra{\del} HP_{j-1}(\Dbdg(R)) \to \cdots.
$$
We give an explicit formula for the boundary map $\del_j$ in this sequence: see Theorem~\ref{thm:boundarymap}. This formula plays a key role in our aforementioned forthcoming work on noncommutative Hodge structures for singularity categories and was a main source of motivation for the present paper.

\begin{rem}
It was brought to our attention by Adeel Khan after the first version of this paper was posted that Theorem~\ref{devissage} may also be obtained as an application of results of Preygel in \cite{preygel}; see \cite[Appendix A]{khan}. In fact, Khan subsequently used this idea to obtain a more general version of Theorem~\ref{devissage} involving algebraic spaces and without the lci assumption~\cite[Theorem A.2]{khan}. This also leads to a more general version of Theorem~\ref{latticeci}: see \cite[Theorem B]{khan}. However, the explicit calculations in our proof of Theorem~\ref{devissage} are still crucial, for instance, to our proof of the formula for the boundary map in the long exact sequence above (see Theorem~\ref{thm:boundarymap}); we do not see a direct way to compute this boundary map using Preygel's results.
\end{rem}

\subsection*{Acknowledgements} We thank Gon\c calo Tabuada for a helpful correspondence during the preparation of this article, and we also thank Adeel Khan for alerting us to the alternative proof of Theorem~\ref{devissage} using work of Preygel in \cite{preygel}, which now appears in his paper \cite{khan}. Finally, we thank the referee for their careful reading and helpful suggestions.

\section{Notation and background}
\subsection{Notation} Let $k$ be a characteristic 0 field. We index cohomologically unless otherwise noted. 
\subsubsection{Dg-enhancements of derived categories}
\label{enhancement}
Given a noetherian $k$-scheme $X$, let $\Cohdg^b(X)$ denote the dg-category of bounded complexes of coherent sheaves on $X$. (A technical set-theoretic point: we are implicitly
considering all categories of modules or sheaves---and complexes thereof---that arise in this paper in a fixed Grothendieck universe, and all such categories are assumed to be small with respect
to a fixed larger Grothendieck universe, cf. \cite[1.4]{TT}.)
We let $\Dbdg(X)$ denote the dg-quotient of $\Cohdg^b(X)$ by the full subcategory of exact complexes. This is a dg-category with the same objects as $\Cohdg^b(X)$
in which a contracting homotopy for each exact complex has been formally adjoined; see \cite{drinfeld} for the precise definition.
For any closed subset $Z \subseteq X$, we let $\DbdgZ(X)$ denote the dg-subcategory of $\Dbdg(X)$ given by complexes with support in $Z$.  
The dg-category $\DbdgZ(X)$ is pre-triangulated, and its associated homotopy category, which we shall write as $\DbZ(X)$, is
isomorphic to the usual bounded derived category of coherent sheaves on $X$ supported on $Z$.

\subsubsection{Mixed Hochschild complexes}
\label{secmixed}
We recall that a \emph{mixed complex of $k$-vector spaces} is a dg-module over the graded commutative $k$-algebra $k\langle e \rangle = k[e]/(e^2)$, where $|e| = -1$.
Typically, a mixed complex is thought of as a  triple $M = (M, b, B)$, where $(M, b)$ is a chain complex, and $B$ denotes the action of $e$; so $b^2 = 0$, $B^2 = 0$, and $[b,B] =
bB + Bb = 0$.  A  \emph{morphism of mixed complexes} refers to  a morphism of dg-$k\langle e \rangle$-modules. Such a morphism is a 
quasi-isomorphism if and only if it so upon forgetting the action of $B$. 
One may associate to any dg-category $\cC$ over $k$ a mixed complex $\MC(\cC)$, its \emph{mixed Hochschild complex}. We refer the reader to e.g. \cite[Section 3]{BW} for a detailed
discussion of mixed Hochschild complexes associated to dg-categories.

The periodic cyclic homology of a mixed complex $M$ is given by the ``Tate construction''.
In detail: the \emph{negative cyclic complex} associated to $M$ is $HN(M) \coloneqq \RHom_{k \langle e  \rangle}(k, M)$.
Since $HN(k) = k[u]$ for a degree two element $u$, $HN(M)$ is naturally a dg-$k[u]$-module, and the \emph{periodic cyclic complex} of $M$ is
$HP(M) \coloneqq HN(M) \otimes_{k[u]} k[u, u^{-1}]$. The \emph{periodic cyclic homology} of $M$ is defined to be $HP_*(M) \coloneqq H_*(HP(M))$. The periodic cyclic complex (resp. homology) of a dg-category $\cC$ over $k$ is given by $HP(\cC) \coloneqq HP(\MC(\cC))$ (resp. $HP_*(\cC) \coloneqq HP_*(\MC(\cC))$.
The assignment $\cC \mapsto HP(\cC)$ is covariantly functorial for dg-functors and sends quasi-equivalences of dg-categories to quasi-isomorphisms. 
See \cite[Section 3.2]{BW} for more details.

\subsection{Mayer-Vietoris for the Hochschild mixed complex}
In this subsection, we recall some background on localizing invariants of dg-categories. The localizing invariants of interest in this paper are the various Hochschild invariants discussed in Section~\ref{secmixed} (see Theorem~\ref{kellerthm}, due to Keller) and topological $K$-theory (see Theorem~\ref{KtopThm}, due to Blanc). The main goal of this subsection is to prove Corollary~\ref{MVcor}, a Mayer-Vietoris result for localizing invariants.

Let us fix a bit more notation/terminology. Given a dg-category $\cC$, we let $[\cC]$ denote its homotopy category. We say an object $X$ in
$\cC$ is {\em contractible} if $X$ is the zero object in $[\cC]$, or, equivalently, if the dga
$\End_{\cC}(X)$ is exact. Let $\cC_\ctr$ denote the full dg-subcategory of $\cC$ given by the contractible objects.

In this paper, a {\em short exact sequence} of dg-categories, written
$$
\cA \xra{\iota} \cB \xra{F} \cC,
$$
consists of pre-triangulated dg-categories $\cA$, $\cB$ and $\cC$ and  a dg-functor $F: \cB \to \cC$,  such that $\cA$ is a full dg-subcategory  of $\cB$ (with $\iota$ denoting the
inclusion functor), $F(A) \in   \cC_\ctr$ for all $A \in \cA$, and the triangulated functor induced by $F$ from the Verdier quotient $[\cB]/[\cA]$ to $[\cC]$ is
an equivalence.

We say a
commutative square 
$$
  \xymatrix{
    X \ar[r] \ar[d] & Y \ar[d] \\
    Z \ar[r] & W
  }
  $$
  of dg-modules over some dga  is {\em homotopy cartesian} if the following equivalent conditions hold:
  (1) its totalization is exact, (2) the induced map on the mapping cones of its rows   is a quasi-isomorphism, or (3) 
the induced map on the mapping cones of its columns   is a quasi-isomorphism.

We will make use of the following localization sequence of dg-categories, the essence of which is due to a result of Gabriel \cite[Ch. V]{gabriel}:

\begin{prop}
\label{gabrieldg}
Let $X$ be a noetherian scheme, $Y$ and $Z$ closed subschemes of $X$, and $U = X \setminus Z$. The sequence
\begin{equation}
\label{sequence}
 \D^{\on{b}, Y \cap Z}_{\on{dg}}(X) \to \DbdgY(X) \to \D^{\on{b}, U \cap Y}_{\on{dg}}(U),
\end{equation}
where the second functor is given by pullback along the open immersion $U \into X$,  is a short exact sequence of dg-categories. 
\end{prop}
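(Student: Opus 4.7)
The plan is to reduce Proposition~\ref{gabrieldg} to three verifications dictated by the definition of a short exact sequence of dg-categories, and then to recognize the nontrivial one as a classical Verdier localization result lifted from Gabriel's theorem about quotients of abelian categories of coherent sheaves.

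First, I would verify the two straightforward conditions. The inclusion $\D^{\on{b}, Y \cap Z}_{\on{dg}}(X) \subseteq \DbdgY(X)$ as a full dg-subcategory is immediate since support in $Y \cap Z$ implies support in $Y$ and the morphism complexes are computed identically in both. For any $A \in \D^{\on{b}, Y \cap Z}_{\on{dg}}(X)$, the pullback $j^* A$ along $j : U \into X$ has support in $(Y \cap Z) \cap U = \emptyset$, hence its cohomology sheaves all vanish on $U$; thus $j^* A$ is exact and therefore contractible in the dg-quotient $\D^{\on{b}, U \cap Y}_{\on{dg}}(U)$, as required.

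The core of the proof is the third condition: showing that the induced triangulated functor
$$
[\DbdgY(X)] \big/ [\D^{\on{b}, Y \cap Z}_{\on{dg}}(X)] \lra [\D^{\on{b}, U \cap Y}_{\on{dg}}(U)]
$$
is an equivalence. By the standard identification of the homotopy category of a dg-quotient of the form $\Dbdg(-)$ with the usual bounded derived category (see \cite{drinfeld}), and by the fact that the Verdier quotient of triangulated categories is computed on homotopy categories, this reduces to producing an equivalence
$$
\DbZ(X)|_{Y}\big/\D^{\on{b}, Y \cap Z}(X) \xra{\simeq} \D^{\on{b}, U \cap Y}(U),
$$
where the superscripts denote supports. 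For essential surjectivity, I would use the noetherian extension property: any coherent sheaf on $U$ with support in $U \cap Y$ extends to a coherent subsheaf of its own pushforward $j_*$ that has support in $Y$, and this extends degreewise to bounded complexes via standard truncation and resolution arguments. For fully faithfulness, I would apply Gabriel's theorem \cite[Ch.~V]{gabriel}, which identifies the abelian Serre quotient $\coh^Y(X)/\coh^{Y \cap Z}(X)$ with $\coh^{U \cap Y}(U)$, and then promote this to the derived level by computing morphisms in the Verdier quotient as roofs and using that any roof over $U$ can be replaced by a roof over $X$ whose arms have kernel and cokernel supported on $Z$.

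The main obstacle will be this last promotion from the abelian quotient (where Gabriel's theorem applies directly) to the bounded derived quotient with support conditions; one must show that every morphism $j^* \cF \to j^* \cG$ in $\D^{\on{b}, U \cap Y}(U)$ is hit by a roof in $\DbdgY(X)$ modulo $\D^{\on{b}, Y \cap Z}_{\on{dg}}(X)$, and that two such roofs representing equal morphisms on $U$ differ by a zig-zag through objects supported on $Y \cap Z$. Once this Verdier-quotient equivalence is established, the dg-enhancement statement follows formally from the definition of the dg-quotient, completing the verification that \eqref{sequence} is a short exact sequence of dg-categories.
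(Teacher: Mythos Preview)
Your proposal follows essentially the same route as the paper: reduce to the triangulated statement, establish the abelian Serre quotient $\coh^Y(X)/\coh^{Y\cap Z}(X) \simeq \coh^{U\cap Y}(U)$ via Gabriel, and then promote to bounded derived categories. The paper differs only in that it does not carry out your ``main obstacle'' by hand; instead it invokes \cite[Lemma~4.4.1]{krause}, which says precisely that a Serre localization sequence of abelian categories induces a Verdier localization sequence on bounded derived categories (with the appropriate support conditions). This replaces your proposed roof-chasing argument for fully faithfulness and essential surjectivity with a single citation, so your sketch is correct but can be shortened considerably.
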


\begin{proof}
  It suffices to show that the sequence
$$
 \D^{\on{b}, Y \cap Z}(X) \to \D^{\on{b}, Y}(X) \to \D^{\on{b}, U \cap Y}(U)
$$
of triangulated categories exhibits $\D^{\on{b}, U \cap Y}(U)$ as the Verdier quotient $\D^{\on{b}, Y}(X) /\D^{\on{b}, Y \cap Z}(X)$. The proof in \cite[Section 2.3.8]{schlichting}
that
$$
\coh^Z(X) \to \coh(X) \to \coh(U)
$$
is a short exact sequence of abelian categories extends verbatim to give a proof that
$$
\coh^{Y \cap Z}(X) \to \coh^Y(X) \to \coh^{U \cap Y}(U)
$$
is a short exact sequence of abelian categories. Now apply \cite[Lemma 4.4.1]{krause}. 
\end{proof}

The following is a slight modification of a notion found in, for example, \cite[Definition 4.3]{TVdB}:

\begin{defn} \label{localizing} Let $E$ be a functor from the category of small\footnote{By ``small", we mean small with respect to our above choices of Grothendieck
    universes.}   dg-categories over $k$ to the category of dg-modules over some fixed dga $\Lambda$.
  We say $E$ is {\em localizing} if the following two conditions hold:
  \begin{enumerate}
  \item If $G: \cA \to \cB$ is a dg-functor that is a Morita equivalence (e.g., a quasi-equivalence), then $E(G): E(\cA) \to E(\cB)$ is a quasi-isomorphism of dg-$\Lambda$-modules.
  \item  If $\cA \xra{\iota} \cB \xra{F} \cC$ is a short exact sequence of pre-triangulated dg-categories, then the   commutative square of dg-$\Lambda$-modules
  \begin{equation} \label{LocalizingE1}
  \xymatrix{
    E(\cA) \ar[r] \ar[d] & E(\cB) \ar[d] \\
    E(\cC_\ctr) \ar[r] & E(\cC)
  }
\end{equation}
\end{enumerate}
  is homotopy cartesian.
\end{defn}

\begin{thm}[Keller]
\label{kellerthm}
The functors $MC$, $HH$, $HN$, and $HP$ are localizing functors in the
sense of Definition~\ref{localizing},  taking values in mixed complexes over $k$, complexes over $k$, dg-$k[u]$-modules, and dg-$k[u, u^{-1}]$-modules, respectively.
\end{thm}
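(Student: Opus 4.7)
The plan is to reduce immediately to the case $E = MC$ and then deduce the remaining three cases by a formal argument. Indeed, $HH$ is obtained from $MC$ by forgetting the action of $B$; $HN(\cC) = \RHom_{k\langle e \rangle}(k, MC(\cC))$; and $HP(\cC) = HN(\cC) \otimes_{k[u]} k[u, u^{-1}]$. Each of these operations preserves quasi-isomorphisms and homotopy cartesian squares of dg-modules over the relevant ground dga, so it suffices to verify conditions (1) and (2) of Definition \ref{localizing} for $E = MC$.

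The two required facts are essentially due to Keller. Condition (1)---Morita invariance of $MC$---is Keller's classical invariance theorem for mixed Hochschild complexes, which proceeds by identifying $MC(\cA)$ with a derived bimodule invariant of $\cA$. The input needed for condition (2) is Keller's localization theorem for Drinfeld quotients: for any full dg-subcategory $\cA$ of a pre-triangulated dg-category $\cB$, the sequence
$$MC(\cA) \to MC(\cB) \to MC(\cB/\cA)$$
is a cofiber sequence of mixed complexes, where $\cB/\cA$ denotes the Drinfeld dg-quotient.

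Granting these inputs, I would verify condition (2) as follows. Let $\cA \xra{\iota} \cB \xra{F} \cC$ be a short exact sequence. By the universal property of the Drinfeld quotient, the hypothesis $F(\cA) \subseteq \cC_\ctr$ produces a canonical dg-functor $\ov{F}: \cB/\cA \to \cC$. Since $[\cB/\cA] = [\cB]/[\cA]$ and the latter is equivalent to $[\cC]$ by assumption, $\ov{F}$ is a Morita equivalence. Keller's localization theorem identifies the cofiber of the top row of the square \eqref{LocalizingE1} with $MC(\cB/\cA)$, which via $\ov{F}_*$ becomes $MC(\cC)$. Applying the same theorem instead to the inclusion $\cC_\ctr \hookrightarrow \cC$ identifies the cofiber of the bottom row with $MC(\cC/\cC_\ctr)$; since $\cC \to \cC/\cC_\ctr$ merely adjoins contracting homotopies to objects that were already contractible in $[\cC]$, this quotient is itself a Morita equivalence, so the bottom cofiber is also quasi-isomorphic to $MC(\cC)$. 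The two identifications are induced by the commutative diagram of dg-functors $\cB/\cA \to \cC \to \cC/\cC_\ctr$, so the natural map between the horizontal cofibers is a quasi-isomorphism, proving that the square is homotopy cartesian.

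The main obstacle is the localization theorem for Drinfeld quotients itself. The subtle point is that $\cB/\cA$ is not a naive quotient---contracting homotopies for the objects of $\cA$ are formally adjoined---so comparing $MC(\cB/\cA)$ with the mapping cone of $MC(\cA) \to MC(\cB)$ requires a careful analysis through a suitable dg-model of the derived category of bimodules. This is the heart of Keller's original argument; once it is in hand, the rest of the proof is essentially formal.
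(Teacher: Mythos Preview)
Your proposal is correct and follows essentially the same approach as the paper: reduce to $MC$ by noting that $HH$, $HN$, and $HP$ are obtained from it by additive functors preserving quasi-isomorphisms, then invoke Keller's Morita invariance for condition (1) and Keller's localization theorem together with the equivalence $[\cC] \simeq [\cC]/[\cC_\ctr]$ for condition (2). The paper's argument is terser---it applies Keller's theorem directly to the two rows of the square without explicitly naming the Drinfeld quotients $\cB/\cA$ and $\cC/\cC_\ctr$---but the content is the same.
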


\begin{proof} The functor
$MC$ inverts Morita equivalences by \cite[Section 1.5]{kellercyclic}. Since the canonical map $[\cC] \to  [\cC]/[\cC_\ctr]$ is a quasi-equivalence, the Theorem in \cite[Section 2.4]{kellercyclic}
  implies that the induced map from the cone of the top arrow to the cone of the bottom arrow in \eqref{LocalizingE1} with $E = MC$ is a quasi-isomorphism.
This proves the result for $MC$. The result for the other three theories follows, since each is obtained from $MC$ by applying an additive  functor that preserves quasi-isomorphisms.
  \end{proof}

  \begin{thm}[Blanc]\label{KtopThm}
  The functor $K^{\on{top}}_\C$ from dg-categories over $\C$ to chain complexes over $\C$, given by sending a dg-category to its complexified topological $K$-theory,
  is localizing.
\end{thm}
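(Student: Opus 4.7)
The plan is to read this off from Blanc's construction in \cite{blanc} and his main localization theorem. Recall that Blanc's topological $K$-theory functor $K^{\on{top}}$ is built in two stages from connective algebraic $K$-theory: one first passes to a ``semi-topological'' variant $K^{\on{st}}$ by a sheafification/topological realization procedure applied to the presheaf $S \mapsto K(\cC \otimes_\C \Perf(S))$ on affine $\C$-schemes of finite type, and then Bott-inverts to obtain $K^{\on{top}}$. Complexification is applied termwise on the resulting spectrum, and the $H\C$-linear spectrum $K^{\on{top}}_\C(\cC)$ is then identified with a chain complex over $\C$ via the standard Eilenberg--MacLane equivalence.

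For the Morita invariance condition, my starting point is that non-connective algebraic $K$-theory of dg-categories inverts Morita equivalences (Keller--Schlichting). Since every subsequent step in Blanc's construction---topological realization, Bott inversion, complexification, and conversion to chain complexes---is applied functorially in $\cC$ and preserves weak equivalences at each stage, I would conclude that $K^{\on{top}}_\C$ also sends Morita equivalences of dg-categories to quasi-isomorphisms.

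For the exactness condition, I would appeal to Blanc's main localization theorem, which asserts that $K^{\on{top}}$ sends a short exact sequence of pre-triangulated dg-categories to a fiber sequence of spectra. Granting this, complexification of spectra is exact and preserves fiber sequences, and the Eilenberg--MacLane equivalence turns them into homotopy cartesian squares of chain complexes of the shape \eqref{LocalizingE1}; this uses the fact that, as in the proof of Theorem~\ref{kellerthm}, for a short exact sequence the inclusion $[\cC_{\ctr}] \hookrightarrow [\cC]$ is carried to a contractible object, so the bottom-left entry contributes nothing essential. The hard part is Blanc's fiber sequence result itself; the rest is formal. Its proof requires a delicate interplay between the algebraic and semi-topological versions of $K$-theory together with control over how the topological realization procedure interacts with the spectrum-level fiber sequences coming from short exact sequences of pre-triangulated dg-categories.
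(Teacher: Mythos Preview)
Your proposal is correct and follows essentially the same route as the paper: both reduce to Blanc's \cite[Proposition 4.15]{blanc}, with Morita invariance coming from part (b) and the fiber-sequence property from part (c), after which complexification and passage to chain complexes are formal. The only minor difference is one of emphasis: where you treat Blanc's localization theorem as a black box whose proof is ``delicate,'' the paper unpacks it slightly by noting that nonconnective algebraic $K$-theory is already localizing (Schlichting) and that topological realization, Bott inversion, and tensoring with $\C$ are each exact, so the result is immediate from those facts rather than requiring any further subtle analysis.
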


\begin{proof}
This essentially follows from \cite[Proposition 4.15]{blanc}. In more detail: $K^{\on{top}}_\C$ inverts Morita equivalences by \cite[Proposition 4.15(b)]{blanc}. Nonconnective algebraic $K$-theory is a localizing invariant \cite[Theorem 9]{schlichting}; so it suffices to observe, as in the proof of \cite[Proposition 4.15(c)]{blanc}, that Blanc's topological realization functor $| - |_{\mathbb{S}}$, inverting the Bott element, and tensoring with $\C$ are all exact functors. 
\end{proof}

The following three Lemmas follow from straightforward diagram chases; we include a proof of the third and omit proofs of the first two. 

\begin{lem} \label{FiberLem}
  If $\alpha: E \to E'$ is a natural transformation of localizing invariants taking values in dg-$\Lambda$-modules, then
  so is the fiber of $\alpha$, written $\fiber(\alpha)$ and defined by 
  $$
  \cA \mapsto \cone(E(\cA) \xra{\alpha(\cA)} E'(\cA))[-1].
  $$
  In particular, the fiber of the complexified topological Chern character map $ch: K^{\on{top}}_\C \to HP$ is a localizing invariant.
  \end{lem}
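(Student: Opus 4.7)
The plan is to verify the two axioms of Definition~\ref{localizing} for $\fiber(\alpha)$, which is routine given the hypotheses on $E$ and $E'$, and then apply this to the Chern character to deduce the ``in particular'' statement.

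First I would verify that $\fiber(\alpha)$ inverts Morita equivalences. Given a Morita equivalence $G : \cA \to \cB$, the maps $E(G)$ and $E'(G)$ are quasi-isomorphisms of dg-$\Lambda$-modules by assumption. Naturality of $\alpha$ gives a commutative square
$$
\xymatrix{
E(\cA) \ar[r]^{\alpha(\cA)} \ar[d]_{E(G)} & E'(\cA) \ar[d]^{E'(G)} \\
E(\cB) \ar[r]_{\alpha(\cB)} & E'(\cB)
}
$$
inducing a map $\fiber(\alpha)(\cA) \to \fiber(\alpha)(\cB)$. Since fiber is a homotopy-invariant construction on chain complexes and both vertical arrows are quasi-isomorphisms, the induced map on fibers is a quasi-isomorphism; this can be verified directly via the long exact sequence in homology of the triangle $\fiber(\alpha) \to E \to E'$ together with the five lemma.

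Next I would check the homotopy cartesian condition. Given a short exact sequence $\cA \xra{\iota} \cB \xra{F} \cC$, the squares \eqref{LocalizingE1} for $E$ and for $E'$ are homotopy cartesian, i.e., their totalizations $T_E$ and $T_{E'}$ are acyclic. The natural transformation $\alpha$ induces a map of squares, and hence a map $T_E \to T_{E'}$; the totalization of the $\fiber(\alpha)$ square is naturally identified with $\fiber(T_E \to T_{E'})$. Since both $T_E$ and $T_{E'}$ are acyclic, so is this fiber. Thus the $\fiber(\alpha)$ square is homotopy cartesian, which completes the verification of the two axioms.

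For the ``in particular'' statement, $K^{\on{top}}_\C$ is localizing by Theorem~\ref{KtopThm} and $HP$ is localizing by Theorem~\ref{kellerthm} (both viewed as taking values in complexes of $\C$-vector spaces, after forgetting extra module structure on $HP$). The topological Chern character $ch$ is a natural transformation, so $\fiber(ch)$ is localizing by the first part. The main conceptual point---namely that homotopy limits (fibers) commute with homotopy limits (homotopy cartesian squares)---is what makes everything go through; the only real thing to be careful about is keeping track of the target $\Lambda$ and confirming that fiber is computed in the appropriate category of dg-$\Lambda$-modules, which presents no obstacle since all the relevant operations are strict.
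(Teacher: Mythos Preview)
Your proof is correct and is precisely the kind of routine diagram chase the paper has in mind; the paper itself omits the proof, stating only that this lemma (along with the next) ``follow[s] from straightforward diagram chases.'' Your verification of the two axioms and the application to the Chern character are exactly what is needed.
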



\begin{lem} \label{DistLem} If $E$ is a localizing invariant taking values in dg-$\Lambda$-modules, then for every short exact sequence
  of pre-triangulated dg-categories $\cA \xra{\iota} \cB \xra{F} \cC$,  there is a distinguished triangle in $D(\Lambda)$, the derived category of dg-$\Lambda$-modules, of the form
  $$
  E(\cA) \to E(\cB) \to E(\cC) \xra{\del_{\cA,F}}  E(\cA)[1],
  $$
  where the map $\del_{\cA,F}$ is the composition
  $$
  E(\cC) \xra{\can_1} \cone\left(E(\cC_\ctr) \to E(\cC)\right) \xra{\alpha^{-1}}
\cone\left(E(\cA) \to E(\cB)\right)  \xra{\can_2}
  E(\cA)[1].
  $$
Here, $\alpha^{-1}$ is the inverse in $D(\Lambda)$ of the quasi-isomorphism $\alpha$ induced by \eqref{LocalizingE1}, and the maps $\can_1$ and $\can_2$ are the canonical maps to and from the mapping cone, respectively. 
\end{lem}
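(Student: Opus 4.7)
The plan is to derive the distinguished triangle from the homotopy cartesian square in axiom~(2) of Definition~\ref{localizing}, after first establishing that $E(\cC_\ctr) \simeq 0$ in $D(\Lambda)$.

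For the vanishing, I would show that $\cC_\ctr$ is Morita equivalent to the trivial dg-category, and then invoke axiom~(1). Every object $X \in \cC_\ctr$ has null-homotopic identity; since $\cC_\ctr$ is closed under shifts, $X[i]$ is contractible for all $i$, so for any $Y \in \cC_\ctr$ we have $H^i\Hom_{\cC_\ctr}(Y, X) = \Hom_{[\cC_\ctr]}(Y, X[i]) = 0$. Thus every representable dg-$\cC_\ctr$-module is acyclic, which forces $\cC_\ctr$ to be Morita equivalent to the zero dg-category, and so $E(\cC_\ctr) \simeq 0$ by axiom~(1).

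Next, the homotopy cartesian property of~\eqref{LocalizingE1} provides a quasi-isomorphism $\alpha: \cone(E(\cA) \to E(\cB)) \xra{\simeq} \cone(E(\cC_\ctr) \to E(\cC))$. The vanishing $E(\cC_\ctr) \simeq 0$ implies that $\can_1: E(\cC) \to \cone(E(\cC_\ctr) \to E(\cC))$ is also a quasi-isomorphism. Consequently, in $D(\Lambda)$ the object $E(\cC)$ is identified with $\cone(E(\cA) \to E(\cB))$ via $\alpha^{-1}\circ\can_1$, so the tautological distinguished triangle
$$E(\cA) \to E(\cB) \to \cone(E(\cA) \to E(\cB)) \xra{\can_2} E(\cA)[1]$$
transports to the desired triangle, with connecting map $\can_2 \circ \alpha^{-1} \circ \can_1 = \del_{\cA,F}$.

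The main obstacle is the vanishing $E(\cC_\ctr) \simeq 0$. Although it is not listed as a separate axiom, it flows from Morita invariance combined with the elementary observation that $[\cC_\ctr]$ is the zero triangulated category; some care is needed here to handle a full subcategory consisting of many contractible objects (rather than a single zero object). Once this is in place, the remainder of the argument is a routine diagram chase in $D(\Lambda)$.
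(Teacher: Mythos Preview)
Your proposal is correct and is precisely the ``straightforward diagram chase'' the paper alludes to; the paper omits the proof of this lemma entirely. One small point worth making explicit: when you transport the tautological triangle along the identification $\can_1^{-1}\circ\alpha$, you should check that the middle map $E(\cB)\to E(\cC)$ really is $E(F)$---this follows because $\alpha$ is the map on cones induced by the square~\eqref{LocalizingE1}, so the canonical map $E(\cB)\to\cone(E(\cA)\to E(\cB))$ followed by $\alpha$ equals $\can_1\circ E(F)$.
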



\begin{lem}  \label{NatDistLem}
Suppose $E$ is a localizing invariant (in the sense of Definition~\ref{localizing}) taking values in dg-$\Lambda$-modules, and each row of the commutative diagram
  $$
  \xymatrix{
    \cA \ar[r] \ar[d] & \cB \ar[r]^F \ar[d] & \cC \ar[d] \\
    \cA' \ar[r] & \cB' \ar[r]^{F'} & \cC' \\
  }
  $$
  is a short exact sequence of pre-triangulated dg-categories.
\begin{enumerate}
\item The vertical maps induce a morphism of distinquished triangles in $D(\Lambda)$  of the form
  \begin{equation} \label{E42}
  \xymatrix{
    E(\cA) \ar[r] \ar[d] & E(\cB) \ar[r] \ar[d] & E(\cC) \ar[r]^-{\del_{\cA,F}} \ar[d] &  E(\cA)[1]\ar[d] \\
      E(\cA') \ar[r]  & E(\cB') \ar[r] & E(\cC') \ar[r]^-{\del_{\cA',F'}}  &  E(\cA')[1],\\
}
  \end{equation}
  where the boundary maps are as defined in the statement of Lemma~\ref{DistLem}.

  \item If, in addition, the map
  $E(\cA) \xra{\simeq} E(\cA')$ is a quasi-isomorphism, then the commutative square of dg-$\Lambda$-modules
  \begin{equation} \label{E2}
  \xymatrix{
    E(\cB) \ar[r] \ar[d] & E(\cC) \ar[d] \\
    E(\cB') \ar[r] & E(\cC')
  }
 \end{equation}
  is homotopy cartesian.
\end{enumerate}
\end{lem}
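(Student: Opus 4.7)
The plan is to deduce part (2) from part (1) as a formal consequence about morphisms of distinguished triangles, and to prove part (1) by unpacking the definition of $\del_{\cA,F}$ from Lemma~\ref{DistLem} and verifying naturality piece by piece.

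For part (1), recall that $\del_{\cA,F}$ is the composite $\can_2 \circ \alpha^{-1} \circ \can_1$, and similarly for $\del_{\cA',F'}$. The given vertical dg-functors induce vertical maps on each of $E(\cA), E(\cB), E(\cC), E(\cC_\ctr)$ (since the functor $\cC \to \cC'$ restricts to $\cC_\ctr \to \cC'_\ctr$), and hence on the relevant mapping cones. The maps $\can_1$ and $\can_2$ are strictly natural with respect to any morphism of pairs of chain complexes. The quasi-isomorphism $\alpha$ (induced by the homotopy cartesian square \eqref{LocalizingE1}) is natural with respect to morphisms of short exact sequences of pre-triangulated dg-categories, since the square \eqref{LocalizingE1} itself is functorial in its input data; passing to inverses in $D(\Lambda)$ preserves this naturality. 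Stacking the three resulting naturality squares yields commutativity of the rightmost square of \eqref{E42}, and commutativity of the two left squares is immediate from the functoriality of $E$.

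For part (2), apply part (1) to obtain the morphism of distinguished triangles \eqref{E42}. The claim then reduces to the standard fact that, in any triangulated category, a morphism of distinguished triangles whose first component is an isomorphism yields a homotopy cartesian square formed by the remaining two components. Concretely in our setting, the cone of $E(\cB) \to E(\cC)$ is quasi-isomorphic to $E(\cA)[1]$ via the first triangle of \eqref{E42}, and similarly the cone of $E(\cB') \to E(\cC')$ is quasi-isomorphic to $E(\cA')[1]$. By the naturality established in part (1), the map induced between these cones by the vertical arrows of \eqref{E2} agrees with the shift of $E(\cA) \to E(\cA')$, which is a quasi-isomorphism by hypothesis. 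Hence \eqref{E2} is homotopy cartesian.

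The most subtle point is the naturality of $\alpha^{-1}$ in $D(\Lambda)$; however, this is automatic from the strict naturality of $\alpha$ itself as a morphism of mapping cones at the level of chain complexes. The remainder of the argument is a formal diagram chase.
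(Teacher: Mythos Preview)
Your argument for part (1) is correct and essentially matches the paper's: both verify, via the commutative cube with inner face $E(\cA),E(\cB),E(\cC_\ctr),E(\cC)$ and outer face the primed analogues, that the square
\[
\xymatrix{
\cone\bigl(E(\cA)\to E(\cB)\bigr) \ar[r]^-{\alpha} \ar[d] & \cone\bigl(E(\cC_\ctr)\to E(\cC)\bigr) \ar[d] \\
\cone\bigl(E(\cA')\to E(\cB')\bigr) \ar[r]^-{\alpha'} & \cone\bigl(E(\cC'_\ctr)\to E(\cC')\bigr)
}
\]
strictly commutes, whence $\alpha^{-1}$ is natural in $D(\Lambda)$.

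For part (2) your route differs from the paper's. The paper does \emph{not} deduce (2) from (1); it argues directly at the chain level with the same cube. Writing $I,O,L,R$ for the totalizations of the inner square, outer square, left trapezoid (corners $E(\cA),E(\cA'),E(\cC_\ctr),E(\cC'_\ctr)$), and right trapezoid (corners $E(\cB),E(\cB'),E(\cC),E(\cC')$) respectively, one has $\cone(I\to O)\cong\cone(L\to R)$ since both compute the three-dimensional totalization of the cube. Now $I$ and $O$ are exact because $E$ is localizing, and $L$ is exact because its two parallel edges $E(\cA)\to E(\cA')$ and $E(\cC_\ctr)\to E(\cC'_\ctr)$ are quasi-isomorphisms (the first by hypothesis, the second because both terms are acyclic). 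Hence $R$ is exact, and $R$ is precisely the totalization of \eqref{E2}.

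Your approach is legitimate, but the ``concrete'' justification you offer is incomplete. Part (1) establishes naturality of the boundary map $\del:E(\cC)\to E(\cA)[1]$, not of an identification $\cone\bigl(E(\cB)\to E(\cC)\bigr)\simeq E(\cA)[1]$; since such an identification is non-canonical in a bare triangulated category, the assertion that ``the map induced between these cones \dots\ agrees with the shift of $E(\cA)\to E(\cA')$'' does not follow from the statement of part (1) alone. In the dg setting this is easily repaired: the strictly natural chain-level zig-zag
\[
\cone\bigl(E(\cA)\to E(\cB)\bigr)\xra{\alpha}\cone\bigl(E(\cC_\ctr)\to E(\cC)\bigr)\xla{\can_1}E(\cC)
\]
of quasi-isomorphisms (from the proof of (1)) lets you replace $E(\cC)$ by $\cone\bigl(E(\cA)\to E(\cB)\bigr)$, and similarly for the primed objects, after which the rows are strict cofiber sequences and your cone comparison goes through verbatim. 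The paper's totalization argument is essentially this same computation, packaged so as to avoid any reference to non-functorial cones in triangulated categories.
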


\begin{proof}
  Both parts will involve the cube of dg-$\Lambda$-modules
\begin{equation} \label{cube}
\xymatrix{
E(\cA') \ar[rrr] \ar[ddd] &&& E(\cB') \ar[ddd] \\
 &E(\cA) \ar[r] \ar[d] \ar[lu] & E(\cB)  \ar[d] \ar[ru] &\\
&E(\cC_\ctr)  \ar[r] \ar[ld]  & E(\cC) \ar[rd] &  \\
E(\cC'_\ctr)  \ar[rrr] &&&  E(\cC'),
}
\end{equation}
which is commutative due to the functorality of $E$. Let us now prove (1). The left two squares of \eqref{E42} clearly commute; as for the right-most square in \eqref{E42}:
from the definition of the boundary map in Corollary \ref{DistLem}, we see that it suffices to show
  $$
  \xymatrix{
    \cone\left(E(\cA) \to E(\cB)\right) \ar[r]^{\alpha} \ar[d] & \cone\left(E(\cC_\ctr) \to E(\cC)\right) \ar[d] \\
        \cone\left(E(\cA') \to E(\cB')\right) \ar[r]^{\alpha'}  & \cone\left(E(\cC'_\ctr) \to E(\cC')\right)  \\
      }
      $$
      commutes. This is a consequence of the commutativity of \eqref{cube}.

To prove (2), let $I$, $O$, $L$, and $R$ denote the totalizations of the four commutative squares of dg-$\Lambda$-modules given by the
inner square, the outer square, the left-hand trapezoid, and the right-hand trapezoid of \eqref{cube}, respectively.
The commutativity of \eqref{cube} gives induced maps $I \to O$ and $L \to R$. Moreover, 
since both $\cone(I \to O)$ and $\cone(L \to R)$ are isomorphic to the totalization of \eqref{cube} regarded as a three-dimensional complex of dg-$\Lambda$-modules,
there is  an isomorphism  $\cone(I \to O) \cong \cone(L \to R)$.
Both $I$ and $O$ are exact since $E$ is localizing, and $L$ is exact since the top and bottom 
edges of the left-hand trapezoid are both  quasi-isomorphisms, the top one  by assumption and the bottom one since $E(\cC_\ctr)$ and $E(\cC'_\ctr)$ are exact.
It follows that $R$ is exact. 
\end{proof}

\begin{notation} \label{formaldef} Let $E$ be any functor from small dg-categories over $k$ to dg-modules over some dga $\Lambda$.
  For any noetherian $k$-scheme $X$ and closed subscheme $Y$ of $X$, we set
  $$
  E^Y(X) \coloneqq E(\Perfdg^Y(X)), \and   E_{\coh}^Y(X) \coloneqq E(\DbdgY(X)).
  $$
\end{notation}

\begin{cor}[Mayer-Vietoris]
\label{MVcor}
Let $X$ be a noetherian $k$-scheme, and suppose $X = U \cup V$, where $U$ and $V$ are open subschemes of $X$. Let $Y$ be a closed subscheme of $X$ 
and $E$ any localizing invariant taking values in dg-$\Lambda$-modules. The square
$$
\xymatrix{
E^Y_{\coh}(X) \ar[r] \ar[d] & E_{\coh}^{U \cap Y}(U)  \ar[d] \\
E_{\coh}^{V \cap Y}(V) \ar[r] & E_{\coh}^{U \cap V \cap Y}(U \cap V), \\
}
$$
in which each map is induced by pullback along an open immersion, is homotopy cartesian.
\end{cor}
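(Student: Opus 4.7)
The plan is to deduce this from Proposition~\ref{gabrieldg} and Lemma~\ref{NatDistLem}(2). Set $Z \coloneqq X \setminus V$, equipped with its reduced subscheme structure; since $X = U \cup V$ we have $Z \subseteq U$.

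First, I would assemble the commutative diagram of short exact sequences of pre-triangulated dg-categories
$$
\xymatrix{
\Dbdg^{Y \cap Z}(X) \ar[r] \ar[d] & \Dbdg^Y(X) \ar[r] \ar[d] & \Dbdg^{V \cap Y}(V) \ar[d] \\
\Dbdg^{Y \cap Z}(U) \ar[r] & \Dbdg^{U \cap Y}(U) \ar[r] & \Dbdg^{U \cap V \cap Y}(U \cap V),
}
$$
by applying Proposition~\ref{gabrieldg} twice: the top row arises from the closed subscheme $Z$ of $X$ (with open complement $V$), and the bottom row from the closed subscheme $Z$ of $U$ (with open complement $U \cap V$ in $U$). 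Every vertical arrow is pullback along the open immersion $U \hookrightarrow X$ or one of its restrictions, so the squares commute by functoriality of pullback.

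Next I would show that the leftmost column becomes a quasi-isomorphism after applying $E$. For this I apply Proposition~\ref{gabrieldg} a third time, now to the closed subscheme $X \setminus U$ of $X$, restricted to complexes supported on $Y \cap Z$: since $Z \subseteq U$, the intersection $(X \setminus U) \cap Y \cap Z$ is empty, so the resulting short exact sequence reads
$$
\Dbdg^{\emptyset}(X) \longrightarrow \Dbdg^{Y \cap Z}(X) \longrightarrow \Dbdg^{Y \cap Z}(U).
$$
The leftmost term is the full dg-subcategory of contractible objects of $\Dbdg(X)$, on which any localizing invariant vanishes up to quasi-isomorphism (by Morita invariance applied to the inclusion of the zero dg-category). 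Combined with the analogous vanishing of $E$ on $\bigl(\Dbdg^{Y \cap Z}(U)\bigr)_\ctr$, the defining homotopy cartesian square in Definition~\ref{localizing}(2) forces $E(\Dbdg^{Y \cap Z}(X)) \xra{\simeq} E(\Dbdg^{Y \cap Z}(U))$.

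With the leftmost column now a quasi-isomorphism, Lemma~\ref{NatDistLem}(2) applied to the two-row diagram yields that its right-hand square is homotopy cartesian under $E$, which is precisely the desired Mayer-Vietoris statement. The main obstacle is the excision-type step establishing that the left column becomes a quasi-isomorphism, and this is handled cleanly by the third application of Proposition~\ref{gabrieldg}; everything else is formal from the localizing properties already packaged in the lemmas of this subsection.
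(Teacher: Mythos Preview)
Your proof is correct and follows essentially the same approach as the paper's: set up a two-row diagram of localization sequences from Proposition~\ref{gabrieldg}, verify excision on the left column, and apply Lemma~\ref{NatDistLem}(2). The only differences are cosmetic: you swap the roles of $U$ and $V$ (taking $Z = X \setminus V$ rather than $Z = X \setminus U$), and for the excision step you argue that $E$ applied to the left column is a quasi-isomorphism via the localizing axiom, whereas the paper asserts directly that the left column is a quasi-equivalence of dg-categories.
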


\begin{proof}
Set $Z \coloneqq X \setminus U$ and $W \coloneqq V \setminus (U \cap V)$.  It follows from Proposition~\ref{gabrieldg}   that  
$$
\D^{\on{b}, \emptyset}_{\on{dg}}(X) \to \D^{\on{b}, Y \cap Z}_{\on{dg}}(X) \to \D^{\on{b}, W \cap Y}_{\on{dg}}(V)
$$
is a short exact sequence of dg-categories; since $\D^{\on{b}, \emptyset}_{\on{dg}}(X)$ has a trivial homotopy category,
we conclude that $\D^{\on{b}, Y \cap Z}_{\on{dg}}(X) \to \D^{\on{b}, W \cap Y}_{\on{dg}}(V)$ is a quasi-equivalence. Now apply Lemma~\ref{NatDistLem} to the
commutative diagram 
$$
\xymatrix{
\D^{\on{b}, Y \cap Z}_{\on{dg}}(X) \ar[r] \ar[d]  & \D^{\on{b}, Y}_{\on{dg}}(X) \ar[r] \ar[d] & \D^{\on{b}, U \cap Y}_{\on{dg}}(U \ar[d]) \\
\D^{\on{b}, W \cap Y}_{\on{dg}}(V) \ar[r] & \D^{\on{b}, V \cap Y}_{\on{dg}}(V) \ar[r] & \D^{\on{b}, U \cap V \cap Y}_{\on{dg}}(U \cap V). \\
}
$$
\end{proof}


\subsection{Koszul duality}
\label{koszulduality}

We recall in this section a Koszul duality statement that is essentially due to Martin \cite[Theorem 5.1]{martin}; see also work of Burke-Stevenson \cite[Theorem 7.5]{BS}. Let $Q$ be an essentially smooth algebra over a field $k$ and $f_1, \dots, f_c$ a (not necessarily regular) sequence of elements in $Q$. Let $\wQ = Q[t_1, \dots, t_c]$, where $|t_i| = 2$ for all $i$, and $\wf = f_1t_1 +  \cdots + f_ct_c \in \wQ$.

\begin{defn}
A \emph{matrix factorization of $\wf$} is a projective, finitely generated, $\Z$-graded $\wQ$-module $P$ equipped with a degree 1 endomorphism $d_P$ such that $d_P^2 = \wf \cdot \id_P$. Given two matrix factorizations $P$ and $P'$, we have a morphism complex $\Hom(P, P')$ with underlying graded module given by the internal $\Hom$ object $\underline{\Hom}_{\wQ}(P, P')$ in the category of $\Z$-graded $\wQ$-modules and differential given by $\a \mapsto d_{P'} \a - (-1)^{|\a|}\a d_{P}$. Let $mf(\wQ, \wf)$ denote the differential $\Z$-graded category with objects given by matrix factorizations of $\wf$ and morphism complexes given as above. 
\end{defn}

Matrix factorizations were introduced by Eisenbud in his study of the asymptotic behavior of free resolutions over local hypersurface rings \cite{eisenbud}. Since their inception in commutative algebra, matrix factorizations have appeared in a wide variety of branches of mathematics: for instance, homological mirror symmetry \cite{BHLW,HPSV, sheridan}, $K$-theory~\cite{brown16,BMTW2,lurie, walker17}, knot theory~\cite{KR08, KR082, oblomkov}, and noncommutative Hodge theory~\cite{HLP, KKP, PV}, among others.

Let $K$ denote the Koszul complex on $f_1, \dots, f_c$. The underlying $Q$-module of $K$ is $\bigwedge_Q(e_1, \dots, e_c)$, where each $e_i$ is an exterior variable of degree
$-1$. Let $\Dbdg(K)$ be the dg-quotient of the dg-category of finitely generated dg-$K$-modules by the subcategory of exact ones, 
as in \ref{enhancement}; and let $\cK$ be the dg-subcategory of $\Dbdg(K)$ on those dg-$K$-modules that are
projective as $Q$-modules. Notice that the inclusion $\cK \into \Dbdg(K)$ is a quasi-equivalence. 

Recall that $mf(\wQ, \wf)_\ctr$ is the dg-subcategory of $mf(\wQ, \wf)$ given by contractible objects. Let
$
\Phi: \cK \to mf(\wQ, \wf)/mf(\wQ, \wf)_\ctr
$
denote the dg-functor that sends an object $(P, d) \in \cK$ to the matrix factorization $(P[t_1, \dots, t_c], d + \sum_{i = 1}^c e_it_i)$; it follows from (a slight reformulation of) a result of Martin \cite[Theorem 5.1]{martin} that $\Phi$ is well-defined and is a quasi-equivalence. As observed in \cite{martin}, the functor $\Phi$ is an instance of Koszul duality. Indeed, when $Q = k$ and each $f_i = 0$, the equivalence $\Phi$ recovers (a nonstandard-graded variant of) the classical Bernstein-Gel'fand-Gel'fand correspondence between an exterior and polynomial algebra \cite{BGG}.

The following result, which plays a key role in the proof of Theorem \ref{devissage}, is now immediate:

\begin{prop}[\cite{martin}]
\label{koszulmf}
We have a commutative diagram of the form
$$
\xymatrix{
\Dbdg(K) \ar[rd] & \ar[l]_-\simeq \cK \ar[d] \ar[r]^-\simeq_-{\Phi} & mf(\wQ, \wf)/mf(\wQ, \wf)_\ctr          \ar[ld] & \ar[l]_-\simeq mf(\wQ, \wf)  \ar[lld] \\
& \DbdgZ(Q),\\
}
$$
where each horizontal functor is a quasi-equivalence. The left-most diagonal map and vertical map are forgetful functors, and the two right-most diagonal maps are given by setting each $t_i$ to 0. The left-most horizontal map is the inclusion, and the right-most horizontal map is the canonical one. 
\end{prop}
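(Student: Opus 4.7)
The plan is to derive this as a formal consequence of Martin's Koszul duality theorem \cite[Theorem 5.1]{martin} by appending two standard quasi-equivalences at either end and verifying commutativity by direct computation on objects.

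First, I would dispatch the two outer horizontal quasi-equivalences. The inclusion $\cK \hookrightarrow \Dbdg(K)$ is fully faithful by definition, and essential surjectivity on the homotopy category holds because $K$ is free of finite rank over $Q$, so every finitely generated dg-$K$-module admits a semi-projective resolution whose underlying graded $Q$-module is projective. The canonical functor $mf(\wQ, \wf) \to mf(\wQ, \wf)/mf(\wQ, \wf)_\ctr$ is a quasi-equivalence because the Verdier quotient of a triangulated category by a subcategory of zero objects is the identity, and the dg-quotient by a full subcategory of contractible objects preserves Hom-complexes up to quasi-isomorphism.

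Next, for commutativity: the two triangles meeting at $\cK$ commute by inspection on objects. Given $(P, d) \in \cK$, the matrix factorization $\Phi(P, d) = (P[t_1, \dots, t_c], d + \sum_i e_i t_i)$ reduces, upon setting each $t_i$ to $0$, to the underlying complex $(P, d)$ of $Q$-modules, which is precisely the image of $(P, d)$ under the forgetful functor $\cK \to \DbdgZ(Q)$. The left-most triangle at $\Dbdg(K)$ commutes because the forgetful functor $\Dbdg(K) \to \DbdgZ(Q)$ restricts along $\cK \hookrightarrow \Dbdg(K)$ to the forgetful functor on $\cK$. Each arrow is a dg-functor given by an explicit formula, so agreement on objects extends to morphism complexes.

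The main obstacle is verifying that ``set each $t_i$ to $0$'' is a well-defined dg-functor $mf(\wQ, \wf)/mf(\wQ, \wf)_\ctr \to \DbdgZ(Q)$. Concretely, this dg-functor is base change along the dg-algebra map $\wQ \to Q$ sending each $t_i$ to $0$; it carries $(M, d_M)$ to the dg-$Q$-module $(M/(t)M, \bar{d}_M)$, and the relation $\bar{d}_M^2 = 0$ follows from $d_M^2 = \sum_j f_j t_j \in (t_1, \dots, t_c) \cdot \End_{\wQ}(M)$. Descent to the quotient by contractibles is automatic because null-homotopies are $\wQ$-linear and so descend modulo $(t_1, \dots, t_c)$. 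For landing in $\DbdgZ(Q)$, note that $M/(t)M$ is a bounded complex of finitely generated projective $Q$-modules; for the support condition, expand $d_M$ in monomials in the $t_i$'s and extract the order-$t_j$ coefficient of $d_M^2 = \sum_j f_j t_j$ to produce a degree $-1$ endomorphism $h_j$ of $(M/(t)M, \bar{d}_M)$ satisfying $\bar{d}_M h_j + h_j \bar{d}_M = f_j \cdot \id$. Hence each $f_j$ acts by zero on cohomology, so the cohomology is supported on $Z$. With this well-definedness in hand, the proposition is immediate from Martin's theorem and the two quasi-equivalences above.
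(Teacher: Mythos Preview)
Your proposal is correct and follows the same approach as the paper: the proposition is stated there as immediate from Martin's theorem together with the two flanking quasi-equivalences already noted in the text preceding it. You simply supply the details the paper omits---the semi-projective resolution argument for $\cK \hookrightarrow \Dbdg(K)$, the observation that quotienting by contractibles is a quasi-equivalence, and the support check via null-homotopies for the $f_j$---none of which are written out in the paper.
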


\subsection{An HKR-type theorem}

We have the following Hochschild-Kostant-Rosenberg (HKR)-type formula due to the second author, building on results of \cite{CT, PP, segal}:
\begin{thm}
\label{HKR}
Let $\wQ$ and $\wf$ be as in Section \ref{koszulduality}, and assume that $\on{char}(k) = 0$. There is a natural HKR-type isomorphism
$$
\MC(mf(\wQ, \wf)) \xra{\simeq} (\Omega^\bu_{\wQ/k},  d_{\wQ}\wf,d_{\wQ})
$$
in the derived category of mixed complexes, where $d_{\wQ} \wf$ denotes the map given by exterior multiplication on the left by the element $d_{\wQ} \wf \in \Omega^1_{\wQ/k}$. 
\end{thm}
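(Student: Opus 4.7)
The plan is to construct an explicit HKR-type quasi-isomorphism
$$
\varphi: \MC(mf(\wQ, \wf)) \xra{\simeq} (\Omega^\bu_{\wQ/k},\, d_{\wQ}\wf,\, d_{\wQ})
$$
and to verify that it refines from the chain-complex level to the mixed-complex level, building on the existing HKR theorems for matrix factorization categories in the $\Z/2$-graded setting (Caldararu--Tu, Polishchuk--Positselski, Segal).

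First, I would identify $\MC(mf(\wQ, \wf))$ with a more tractable curved-algebra model. The category $mf(\wQ, \wf)$ is Morita-like equivalent to the commutative curved dga $(\wQ, 0, \wf)$, where $\wf$ is interpreted as a curvature element; its mixed Hochschild complex then admits an explicit description as a curved generalization of the standard Hochschild complex of $\wQ$ over $k$. Using the classical Loday--Quillen--Connes antisymmetrization, available because $\chr(k) = 0$, I would define $\varphi$ by sending an $n$-chain $a_0 \otimes a_1 \otimes \cdots \otimes a_n$ to $\tfrac{1}{n!}\, a_0\, d_{\wQ} a_1 \wedge \cdots \wedge d_{\wQ} a_n \in \Omega^n_{\wQ/k}$.

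Second, I would verify compatibility of $\varphi$ with both pieces of the mixed structure. The Hochschild differential on the curved Hochschild complex acquires an extra summand involving the curvature $\wf$ relative to the uncurved case; a direct Leibniz-rule computation identifies its image under $\varphi$ with left exterior multiplication by the $1$-form $d_{\wQ}\wf$. The identification of the Connes cyclic operator $B$ with $d_{\wQ}$ is then formally the same as in the classical (uncurved) HKR computation, since $B$ is defined purely in terms of cyclic permutations and the signed degeneracy map, and these are unaffected by the curvature.

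Finally, to show $\varphi$ is a quasi-isomorphism, I would reduce to the already-known HKR theorem for $\Z/2$-graded matrix factorization categories. The $\Z$-grading present in $mf(\wQ, \wf)$ comes from the degree-$2$ variables $t_i$; forgetting this extra grading, or equivalently filtering by internal $t$-degree and comparing associated gradeds, brings one to the established setting. The main obstacle, and the reason this requires real work beyond a citation, is upgrading the quasi-isomorphism from chain complexes to \emph{mixed} complexes: the chain-level assertion is a standard curved HKR computation, but matching the Connes operator with $d_{\wQ}$ on the nose in the curved, $\Z$-graded setting requires careful tracking of the cyclic structure through the curved Hochschild machinery. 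This is also where the $\chr(k) = 0$ assumption is essential, since it ensures the antisymmetrization strictly inverts the symmetrization and that the Loday--Quillen--Connes formula yields a chain-level section on the nose.
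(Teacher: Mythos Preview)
Your sketch conflates two versions of the Hochschild complex that must be kept separate here, and the gap lies precisely in the passage between them. The Morita comparison between $mf(\wQ,\wf)$ and the curved algebra $(\wQ,0,-\wf)$, as well as the curved HKR theorems you invoke (C\u{a}ld\u{a}raru--Tu, Polishchuk--Positselski, Segal, Efimov), are all statements about Hochschild homology \emph{of the second kind}, i.e.\ about $\MC^{II}$. In that setting your antisymmetrization map and the identification of $B$ with $d_{\wQ}$ go through essentially as you say, and indeed Efimov's result already packages this as a mixed-complex quasi-isomorphism $\MC^{II}(\wQ,-\wf)\simeq(\Omega^\bu_{\wQ/k},\,d_{\wQ}\wf,\,d_{\wQ})$. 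But the theorem concerns the ordinary (first-kind) mixed Hochschild complex $\MC(mf(\wQ,\wf))$, and the canonical map $\MC\to\MC^{II}$ is \emph{not} a quasi-isomorphism in general for curved dg-categories. Establishing that it is one in this particular case is the genuine content; this is the result of Walker cited in the paper, and it is not recovered by your filtration-by-$t$-degree argument or by the mixed-complex bookkeeping you describe.

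By contrast, the paper's proof is a short concatenation of citations: Efimov gives the HKR quasi-isomorphism for $\MC^{II}$ of the curved algebra, Polishchuk--Positselski identify $\MC^{II}$ of the curved algebra with $\MC^{II}$ of the matrix factorization category, and Walker's theorem supplies the missing comparison $\MC(mf(\wQ,\wf))\xra{\simeq}\MC^{II}(mf(\wQ,\wf))$. Your identification of ``upgrading to mixed complexes'' as the main obstacle is therefore misdirected: that upgrade is already handled by Efimov, while the step you omit---first kind versus second kind---is where the actual work resides.
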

\begin{proof}
It follows from a result of Efimov \cite[Proposition 3.14]{efimov2}
that there is a quasi-isomorphism
$$
\MC^{II}(\wQ, -\wf) \xra{\simeq} (\Omega^\bu_{\wQ/k},  d_{\wQ} \wf, d_{\wQ}),
$$
where $\MC^{II}(\wQ, -\wf)$ denotes the mixed Hochschild complex of the second kind of the curved algebra $(\wQ, -\wf)$; see e.g. \cite[Sections 2 and 3]{BW} for background on curved algebras and their Hochschild invariants of the second kind. 
By work of Polishchuk-Positselski \cite{PP}, there is a canonical isomorphism $\MC^{II}(\wQ, -\wf) \cong \MC^{II}(mf(\wQ, \wf))$ in the derived category of mixed complexes. See \cite[Proposition 3.25]{BW} for an explicit formulation of this result; note that the category $\Perf(\wQ,\wf)^{\op}$ in that statement coincides with $mf(\wQ, -\wf)^{\op} \cong mf(\wQ, \wf)$. Finally, by a result of the second-named author \cite{walker}, the canonical map $\MC(mf(\wQ, \wf)) \to \MC^{II}(mf(\wQ, \wf))$ is a quasi-isomorphism. 
\end{proof}

Combining Theorem \ref{HKR} with the horizontal quasi-equivalences in the diagram in Proposition~\ref{koszulmf}, one arrives at a formula for the mixed Hochschild complex of $\Dbdg(K)$. 

%


\section{Key technical result}
We begin by fixing the following  

\begin{notation} \label{notation}
  Let $S = \bigoplus_{j \geq 0} S^j$ be an $\N$-graded $k$-algebra essentially of finite type
  that is concentrated in even degrees (i.e., $S^j = 0$ for $j$ odd) and commutative.
  Given a degree two element $h \in S^2$, we define
  $$
  \begin{aligned}
HN^{dR}(S, h) & \coloneqq (\O^\bu_{S/k}[u], d_Sh+ ud_S) \\
HP^{dR}(S, h) & \coloneqq HN^{dR}(S,h) \otimes_{k[u]} k[u, u^{-1}] = (\O^\bu_{S/k}[u,u^{-1}], d_Sh+ ud_S), \\
\end{aligned}
$$
where $d_S$ denotes the de Rham differential, $u$ is a degree $2$ variable, and the summand $d_Sh$ of the differential indicates exterior multiplication on the left by the element
$d_Sh \in \O^1_{S/k}$.
In these complexes, the degree of an element $a_0 da_1 \cdots da_j \in \Omega^j_{S/k}$ is declared to be  $-j + \sum_i |a_i|$; in particular, the operator $d_S$ has degree $-1$
and both $d_Sh$ and $ud_S$ have degree $1$.

The symbols $HN^{dR}$ and $HP^{dR}$ are meant to indicate that, under certain conditions, these complexes are de Rham models of the negative cyclic and
periodic cyclic complexes of the matrix factorization category $mf(S, h)$: see Theorem \ref{HKR}, and also \cite{walker}.
\end{notation}

Let $A$ satisfy the assumption on $S$ in  \ref{notation}, and assume also that $A$ is essentially smooth over $k$.
Fix $f \in A^0$ and $g \in A^2$, and let $t$ be a degree $2$ variable. The goal of this section is to prove the following key technical result, which plays a crucial role in the proof of Theorem~\ref{devissage}.

\begin{prop}
\label{keylemma}
The square
\begin{equation}
\label{keysquare}
\xymatrix{
HP^{dR}(A[t], ft + g) \ar[r] \ar[d] & HP^{dR}(A, g) \ar[d] \\
HP^{dR}(A[1/f, t], ft + g) \ar[r] &   HP^{dR}(A[1/f],g ),
}
\end{equation}
in which the vertical maps are induced by inverting $f$  and the horizontal maps are induced by setting $t = 0$, is homotopy cartesian. Moreover, the bottom-left complex $HP^{dR}(A[1/f, t], ft + g)$ is $k[u]$-linearly contractible.
\end{prop}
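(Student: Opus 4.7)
The plan is to establish the contractibility claim first, then deduce the homotopy-cartesian property from it. For brevity, I will write $TL \coloneqq HP^{dR}(A[t], ft+g)$, $TR \coloneqq HP^{dR}(A, g)$, $BL \coloneqq HP^{dR}(A[1/f,t], ft+g)$, and $BR \coloneqq HP^{dR}(A[1/f], g)$ for the four corners of \eqref{keysquare}.

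For the contractibility of $BL$: set $B \coloneqq A[1/f]$. Since $f$ is a unit in $B$, the substitution $r \coloneqq ft + g$ defines a graded $B$-algebra isomorphism $B[t] \xrightarrow{\sim} B[r]$ sending $ft+g$ to $r$, and hence an isomorphism $HP^{dR}(B[t], ft+g) \cong HP^{dR}(B[r], r)$. The underlying graded module of the latter factors as
\[
\Omega^{\bu}_{B/k}[u, u^{-1}] \otimes_{k[u, u^{-1}]} C_r, \quad \text{where } C_r \coloneqq (k[r, dr][u, u^{-1}],\, dr \wedge + u \partial_r \cdot dr).
\]
Since the differentials $u d_B$ and $dr \wedge + u \partial_r \cdot dr$ act on disjoint tensor factors and commute, this is an honest tensor product of chain complexes, so it suffices to produce a $k[u, u^{-1}]$-linear contracting homotopy on the second factor $C_r$. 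I would do this explicitly by setting $h_0(r^n) \coloneqq 0$ and
\[
h_0(r^n \, dr) \coloneqq \sum_{j=0}^{n} (-u)^{n-j} \tfrac{n!}{j!} \, r^j,
\]
and verifying $[\delta_{C_r}, h_0] = \id_{C_r}$ by direct calculation; the hypothesis $\on{char}(k) = 0$ enters here via the factorials. The tensor-product homotopy $1 \otimes h_0$ then gives the desired $k[u]$-linear contracting homotopy on the full complex, establishing the contractibility of $BL$.

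For the homotopy-cartesian property: given the contractibility of $BL$, the square is homotopy cartesian iff the induced map $\fiber(TL \to TR) \to \fiber(BL \to BR) \simeq BR[-1]$ is a quasi-isomorphism. The surjection $TL \to TR$ given by $t \mapsto 0$, $dt \mapsto 0$ has kernel $K[u, u^{-1}]$, where $K \coloneqq t \Omega^{\bu}_{A/k}[t] \oplus \Omega^{\bu-1}_{A/k}[t] \, dt$. Relative to this decomposition, the restricted differential is lower-triangular, with off-diagonal entry (up to sign) the operator $D \coloneqq f + u \partial_t$ acting on polynomials in $t$, followed by $(\cdot)\, dt$. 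Since $D$ is upper-triangular with $f$ along the diagonal on each finite truncation $k[t]_{\le N}$, it admits a terminating Neumann-series inverse $D^{-1} = f^{-1} \sum_{k \ge 0} (-u \partial_t / f)^k$ after inverting $f$. I would use this invertibility to establish (a) that $f$ acts invertibly on the cohomology of $K[u, u^{-1}]$, so that the localization $K[u, u^{-1}] \to K[1/f][u, u^{-1}]$ is itself a quasi-isomorphism; and (b) that the localized complex is quasi-isomorphic to $BR[-1]$ via the substitution $t \mapsto -u/f$ dictated by the Neumann inverse.

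The main obstacle is the second step above, and especially point (a): showing that the cohomology of the unlocalized kernel complex $K[u, u^{-1}]$ is already $f$-local. Heuristically this should follow because $f$ enters the differential via $f \, dt$ in $d(ft+g)$, so the ``$dt$-direction'' encodes $1/f$ implicitly; but making this precise requires careful tracking of the interplay between the diagonal and off-diagonal pieces of $\delta|_K$. If the direct approach becomes too intricate, an alternative would be to invoke Theorem~\ref{HKR} and Proposition~\ref{koszulmf} to re-express each corner as the periodic cyclic complex of a matrix factorization category and reduce the claim to a standard localization sequence for dg-categories.
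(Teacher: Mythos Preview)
Your contractibility argument for $BL$ is correct and is, if anything, cleaner than the paper's: the substitution $r = ft+g$ together with the tensor factorization avoids the somewhat opaque formula the paper writes down directly. So that half is fine.

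The gap is exactly where you flag it. For the homotopy-cartesian claim you need step (a): that $f$ already acts invertibly on the cohomology of the \emph{unlocalized} kernel $K[u,u^{-1}]$. The only tool you have produced toward this is the invertibility of $D = f + u\partial_t$ \emph{after} inverting $f$, via the Neumann series. But that statement lives entirely on $A[1/f]$ and gives you nothing about the $A$-module structure of $H^*(K[u,u^{-1}])$; it only helps with (b), which in fact is immediate anyway once you know $BL$ is contractible (localization is exact, so $K[1/f][u,u^{-1}] = \ker(BL \to BR) \simeq BR[-1]$). So the real content of the proposition is precisely the step you have not supplied. Your suggested fallback---translate via Theorem~\ref{HKR} and Proposition~\ref{koszulmf} to matrix factorization categories and invoke a ``standard localization sequence''---is circular: this Proposition is the key input in the proof of Theorem~\ref{affine}, which is where the relevant localization square of $mf$-categories is shown to be homotopy cartesian. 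There is no off-the-shelf localization sequence for $mf(A[t],ft+g) \to mf(A,g)$ that you can appeal to.

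The paper closes this gap by working with the fibers of the \emph{vertical} maps rather than the horizontal ones, and by constructing an explicit $\Omega^\bullet_{A/k}[u]$-linear chain map $\varphi\colon M \to HN^{dR}(A[1/f],g)$, where $M = (\Omega^\bullet_{A/k}[t,u],\, tdf + dg + ud)$, given by $\varphi(t^i) = \tfrac{(-1)^{i+1} i!}{f^{i+1}} u^i$. The crucial computation (Lemma~\ref{philemma}) is that both $0 \to M[-2] \xra{t} M \xra{t=0} HN^{dR}(A,g) \to 0$ and $0 \to M[-2] \xra{\nabla \cdot t} M \xra{\varphi} HN^{dR}(A[1/f],g)$ are exact, and after inverting $u$ the second becomes short exact. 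Your operator $D$ is exactly the paper's $\nabla$, so you were close; what is missing is the map $\varphi$ and the exactness statement for $\nabla \cdot t$, which is what replaces your unproven step (a).
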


Before proving Proposition \ref{keylemma}, we establish a series of intermediate technical results. Define a complex
$$
M = M_{A, f, g} =  (\O^\bu_{A/k}[t,u], tdf + dg + ud),
$$
where $d$ denotes the de Rham differential in $\O^\bu_{A/k}$. Here, as above, the summands $tdf$ and $dg$ of the differential denote exterior multiplication on the left by the
elements $tdf$ and $dg$ of $\O^1_{A/k}[t]$. (To clarify, if $g = g_0 + g_1t +\cdots + g_mt^m$, then $dg = dg_0 + tdg_1 + \cdots + t^m dg_m$.) 
Define
$$
\n: M \to M
$$
to be the $\O^\bu_{A/k}[u]$-linear chain endomorphism such that $\n(t^i) = ft^i +   it^{i - 1}u$; that is, $\n = f + u\frac{\del}{\del t}$. 

\begin{lem}
\label{cone}
There is an isomorphism
$$
HN^{dR}(A[t], ft + g) \cong  \fiber(\n) 
$$
of complexes of $A[t,u]$-modules, where the left-hand side is defined in Notation \ref{notation} using $S = A[t]$ and $h  = ft +g$, and $\fiber(\n) : = \on{cone}(\n)[-1]$.
\end{lem}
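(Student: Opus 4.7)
The plan is to explicitly decompose $HN^{dR}(A[t], ft+g) = (\O^\bu_{A[t]/k}[u], D)$ as a filtered complex, identify the associated graded with two copies of $M$, and match the resulting extension with $\fiber(\n) = \cone(\n)[-1]$. So the proof is essentially a direct unwinding.

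First, since $t$ is adjoined to $A$ polynomially and $|dt|=1$, there is a natural direct-sum decomposition of graded $A[t,u]$-modules
$$
\O^\bu_{A[t]/k}[u] \;=\; M \;\oplus\; M\cdot dt,
$$
where $M = \O^\bu_{A/k}[t,u]$. Using $d_{A[t]}(ft+g) = t\,df + f\,dt + dg$ from Leibniz, together with the formulas $d_{A[t]}(\omega) = d_A\omega + (-1)^{|\omega|}\del_t\omega\cdot dt$ and $d_{A[t]}(\omega'\cdot dt) = d_A(\omega')\cdot dt$ (the latter since $d_{A[t]}^2 = 0$), and the Koszul identity $f\,dt\wedge\omega = (-1)^{|\omega|} f\omega\cdot dt$, a direct calculation yields
$$
D(\omega + \omega'\cdot dt) \;=\; d_M(\omega) \;+\; \bigl[(-1)^{|\omega|}\n(\omega) + d_M(\omega')\bigr]\cdot dt
$$
for $\omega,\omega'\in M$, where $d_M = (t\,df+dg)\wedge(-) + u\,d_A$ is exactly the differential of $M$ from the lemma's setup. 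A standing use here is that the hypothesis that $A$ is concentrated in even degrees forces form-degree and total-degree parities to agree, so every Koszul sign can be written uniformly as $(-1)^{|\cdot|}$ with $|\cdot|$ the total degree.

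Finally, this formula manifestly identifies $(\O^\bu_{A[t]/k}[u], D)$ with $\fiber(\n)$: using the standard model $\fiber(\n)^n = M^n \oplus M^{n-1}$ with differential $(x,y)\mapsto (d_M x,\; \n(x) - d_M y)$, the $A[t,u]$-linear map
$$
\varphi\colon \O^\bu_{A[t]/k}[u] \longrightarrow \fiber(\n), \qquad \varphi(\omega + \omega'\cdot dt) = \bigl(\omega,\; (-1)^{|\omega'|}\omega'\bigr),
$$
is a bijection of graded $A[t,u]$-modules, and the check $\varphi\circ D = d_{\fiber(\n)}\circ \varphi$ reduces after substituting the formula above to the identity $(-1)^{|\omega|} = -(-1)^{|\omega'|}$, which holds automatically whenever $|\omega| = |\omega'|+1$.

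The main obstacle is really nothing more than sign bookkeeping: the Koszul signs arising when $dt$ is moved past a form of pure degree must be tracked consistently, and this is precisely where the evenness hypothesis on $A$ enters the argument. Modulo these signs, the result is a very natural reorganization of the de Rham model of $HN$, exhibiting the $t$-direction as a Koszul-type extension and identifying its connecting map (up to sign) with $\n$.
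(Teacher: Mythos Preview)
Your proof is correct and follows essentially the same route as the paper's: both arguments rest on the decomposition $\Omega^\bu_{A[t]/k}[u] \cong M \oplus M\cdot dt$ and the identification of the induced differential with that of $\fiber(\n)$. The paper records this in a single line, while you have spelled out the sign bookkeeping explicitly; your remark that the evenness of $A$ is what makes the signs work is slightly overstated (the Koszul signs are governed by total degree regardless, and it is really the evenness of $|t|$ that keeps $(-1)^{|\omega|}$ well-defined across the $t$-expansion), but this does not affect the argument.
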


\begin{proof}
The composition 
$$
\O^\bu_{A[t]/k}[u]  \cong\O^\bu_{A/k}[t,u] \oplus  \O^\bu_{A/k}[t,u]dt  \cong\O^\bu_{A/k}[t,u] \oplus  (\O^\bu_{A/k}[t,u])[-1]
$$
gives the desired chain isomorphism.
\end{proof}

Define an $\O^\bu_{A/k}[u]$-linear map
$
\varphi : M \to HN^{dR}(A[1/f],g )
$
by $\varphi(t^i) = \frac{(-1)^{i+1} i! }{f^{i+1}}u^i$.

\begin{lem}
\label{philemma}
The map $\varphi$ has the following properties:
\begin{enumerate}
\item $\varphi$ is a chain map.
\item  $\varphi \circ \n = 0$.
\item The sequence
  $
 \left(0 \to M[-2]  \xra{\n \cdot t} M \xra{\varphi} HN^{dR}(A[1/f],g )\right)
  $
  is exact.
\item The sequence
  $
  \left(0 \to (M[u^{-1}])[-2]    \xra{\n \cdot t} M[u^{-1}]  \xra{\varphi} HP^{dR}(A[1/f],g ) \to 0\right)
  $
  is exact.
\end{enumerate}
\end{lem}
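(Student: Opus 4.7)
The plan is to handle (1) and (2) by direct calculation on the generators $\{t^i\}$ of $M$ viewed as a free $\O^\bu_{A/k}[u]$-module, and to treat (3) and (4) as module-theoretic questions: since $\n$, multiplication by $t$, and $\varphi$ are all $\O^\bu_{A/k}[u]$-linear, one may forget the differentials on $M$, $HN^{\mathrm{dR}}$, and $HP^{\mathrm{dR}}$ for the purpose of establishing exactness.

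For (1), the check on $\omega\, t^i$ reduces to the identity $d\bigl(f^{-(i+1)}\bigr) = -(i+1) f^{-(i+2)}\, df$, whose contribution on the $d_{HN}\circ \varphi$ side precisely cancels against $\varphi$ applied to the $tdf$-piece of $d_M(\omega t^i)$, shifted by one in $i$. Part (2) is a one-line computation: $\varphi(\n(t^i)) = \tfrac{(-1)^{i+1} i!}{f^i} u^i + \tfrac{(-1)^i i!}{f^i} u^i = 0$.

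For (3), a short computation yields $\n(tP) = u p_0 + \sum_{j \geq 1} \bigl(f p_{j-1} + (j+1) u p_j\bigr)\, t^j$ for $P = \sum_j p_j t^j \in M$; injectivity of $\n \cdot t$ then follows by induction on $j$, using that $u$ is a non-zero-divisor on $\O^\bu_{A/k}[u]$. For exactness at $M$, given $x = \sum_{i=0}^n c_i t^i$ with $\varphi(x) = 0$, I solve $x = \n(tY)$ recursively by setting $q_0 = c_0/u$ and $q_j = (c_j - f q_{j-1})/(u(j+1))$; I then need to verify (a) that each such division is exact, so $q_j \in \O^\bu_{A/k}[u]$, and (b) that $q_j = 0$ for $j > n$, so $Y \in M$. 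Both follow from clearing denominators in $\varphi(x) = 0$ to obtain the polynomial identity $\sum_{i=0}^n (-1)^i i!\, u^i f^{n-i} c_i = 0$ in $\O^\bu_{A/k}[u]$ (the lift uses that $f$ is a non-zero-divisor on $\O^\bu_{A/k}[u]$, which holds in the intended applications), expanding $c_i = \sum_\ell c_{i,\ell} u^\ell$, and matching $u$-powers.

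For (4), inverting $u$ makes the recursion in (3) unconditionally solvable, so exactness at the middle two terms is immediate. Surjectivity of $\varphi : M[u^{-1}] \twoheadrightarrow HP^{\mathrm{dR}}(A[1/f], g)$ is where the characteristic-zero hypothesis enters: any element $\omega/f^{m+1}$ with $m \geq 0$ admits the explicit preimage $\tfrac{(-1)^{m+1}}{m!\, u^m}\, t^m \omega \in M[u^{-1}]$, which is well-defined precisely because $m!$ is invertible in $k$. I expect the main obstacle to be the bookkeeping in (3), i.e., checking that the single lifted polynomial identity genuinely produces the infinite family of coefficient-wise relations required for both (a) and (b). A direct approach is to derive the closed-form expression $q_j = \tfrac{1}{(j+1)!} \sum_{k=0}^j \tfrac{(-f)^{j-k} k!}{u^{j-k+1}} c_k$, expand each $c_k$ in $u$, and check that the resulting $u^\ell$-coefficient of $q_j$ is (for $j \leq n$) a legitimate element of $\O^\bu_{A/k}$ and (for $j > n$) precisely the polynomial expression forced to vanish by the lifted $\varphi(x) = 0$ identity.
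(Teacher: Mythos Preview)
Your proposal is correct and follows essentially the same strategy as the paper: direct verification for (1) and (2), recursive construction of a preimage for (3), and deduction of (4) from (3) plus surjectivity of $\varphi$ after inverting $u$. The only organizational difference in (3) is that you solve for the $t^j$-coefficient $q_j$ as an element of $\Omega^\bullet_{A/k}[u^{\pm 1}]$ and then verify it lies in $\Omega^\bullet_{A/k}[u]$ and eventually vanishes, whereas the paper works directly one bidegree at a time, setting $\beta_{0,j} = \omega_{0,j+1}$, $\beta_{i,0} = \omega_{i+1,0}/f$, and $\beta_{i,j} = (\omega_{i,j+1} - f\beta_{i-1,j+1})/(i+1)$ for $i,j \geq 1$; unpacking your recursion $u$-coefficient by $u$-coefficient recovers exactly this. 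Your explicit flag that the lift of $\sum_i (-1)^i i!\, u^i f^{n-i} c_i = 0$ from $A[1/f]$ to $A$ uses that $f$ is a non-zero-divisor on $\Omega^\bullet_{A/k}$ is well-observed; the paper uses the same hypothesis implicitly when it writes $\beta_{i,0} = \omega_{i+1,0}/f$.
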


\begin{proof}
  Parts (1) and (2) are straightforward to check. Part (4) follows from (3), using that
  $\varphi$ becomes surjective upon inverting $u$. As for (3), one easily checks that $\n \cdot t$ is injective. Suppose $m \in \ker(\varphi)$. We construct forms $\b_{i, j} \in \Omega^\bu_{A/k}$ such that $(\n \cdot t)( \sum_{i \ge 0, j \ge 0} \beta_{i, j}t^iu^j) = m$. Write $m = \sum_{i,j \ge 0} \om_{i, j} t^iu^j$. We have
$$
\sum_{i + j = n}  \frac{(-1)^{i + 1}i! \om_{i,j}}{f^{i + 1}} = 0 \quad \text{for all $n \ge 0$.}
$$
In particular, $f$ divides $\om_{i, 0}$ for all $i \ge 0$. Set $\beta_{0,j} = \om_{0,j+1}$ and $\beta_{i,0} = \frac{\om_{i+1, 0}}{f}$. We define the forms $\beta_{i, j}$ for $i,j \ge 1$ inductively, on $i$, via the formula
$$
\beta_{i, j} = \frac{\om_{i, j+1} - f \b_{i-1, j+1}}{i+1}.
$$
Let $\widetilde{m} = \sum_{i \ge 0, j \ge 0} \beta_{i, j}t^iu^j$. Directly applying the formula for $\n \cdot t$, we have:
$$
(\n \cdot t)(\widetilde{m}) = \left(\sum_{i \ge 0} f \b_{i,0} t^{i+1}\right) + \left(\sum_{j \ge 0}  \b_{0,j} u^{j+1}\right) + \left(\sum_{i \ge 1, j \ge 1} (f \b_{i -1,j} + (i+1)\b_{i, j-1}  )t^iu^{j} \right).
$$
Now compare coefficients to check that $(\n \cdot t)(\widetilde{m}) = m $. \end{proof}

\begin{lem}
\label{contractible}
The complex $HN^{dR}(A[1/f, t], ft + g)$ is $k[u]$-linearly contractible, via the degree $-1$ $k[u]$-linear endomorphism $h$ of $HN^{dR}(A[1/f, t], ft + g)$ given by
$$
h(\om_1 t^i  + \om_2 t^j dt) = (-1)^{|\om_2|} \sum_{\ell} (-1)^\ell \frac{\om_2 u^\ell}{f^{\ell+1}} \frac{\partial^\ell (t^j)}{\partial t^\ell}
$$
for $\om_1, \om_2 \in \Omega_{A[1/f]}^\bu$. The complex $HP^{dR}(A[1/f, t], ft + g)$ is $k[u,u^{-1}]$-linearly contractible via a homotopy given by the same formula. 
\end{lem}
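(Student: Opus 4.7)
The plan is to verify $Dh + hD = \mathrm{id}$ by direct computation on basis elements, where $D$ denotes the differential on $HN^{dR}(A[1/f, t], ft+g)$. The underlying graded module splits as $M_1 \oplus M_2$ with
\[
M_1 = \Omega^\bullet_{A[1/f]/k}[t,u], \qquad M_2 = M_1 \cdot dt,
\]
and by inspection of its formula, $h$ vanishes on $M_1$ and maps $M_2$ into $M_1$. Since $\partial_t^\ell(t^j) = 0$ for $\ell > j$, each defining sum is finite, so $h$ is a well-defined $k[u]$-linear endomorphism of degree $-1$.

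The computational engine is the telescoping identity
\[
(f + u\partial_t)\left(\sum_{\ell \ge 0} \frac{(-u)^\ell}{f^{\ell+1}}\,\partial_t^\ell(t^j)\right) = t^j,
\]
which follows by direct expansion: the two resulting sums cancel termwise, leaving only the $\ell = 0$ contribution. In essence, the formula for $h$ is this identity wedged with $dt$; it constructs a right inverse (up to the Koszul sign $(-1)^{|\omega|}$) to the ``$dt$-producing'' operator $\omega t^j \mapsto (-1)^{|\omega|}(f t^j + u j t^{j-1})\omega\, dt$ on $M_1$, which encodes how the $fdt$-summand of the curvature and the $t$-differentiating part of $u d_S$ combine.

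\emph{Case 1} ($x = \omega_1 t^i \in M_1$): Here $h(x) = 0$, so the claim reduces to $hD(x) = x$. Among the summands of $D(x)$, only those producing $dt$ survive the application of $h$, and the telescoping identity returns exactly $\omega_1 t^i$. \emph{Case 2} ($x = \omega_2 t^j\, dt \in M_2$): Here $D(x) \in M_2$ since $dt \wedge dt = 0$. The $dt$-producing parts of $D$ combined with $h$ reproduce $\omega_2 t^j\, dt$ by the same telescoping mechanism, while the remaining coefficient-side contributions in $hD(x)$ and $Dh(x)$---involving $df \wedge \omega_2$, $dg \wedge \omega_2$, and $d_{A[1/f]}\omega_2$---match with opposite signs via the Pascal-type identity
\[
\frac{(j+1)!}{(j+1-\ell)!} = \frac{j!}{(j-\ell)!} + \frac{\ell \cdot j!}{(j+1-\ell)!},
\]
which packages the Leibniz contributions of $d$ applied to the denominators $1/f^{\ell+1}$ that arise inside $Dh(x)$; these contributions therefore cancel pairwise.

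The $HP^{dR}$ assertion is immediate: since each sum defining $h$ is finite, the same formula extends $k[u,u^{-1}]$-linearly to $HP^{dR}(A[1/f, t], ft+g) = HN^{dR}(A[1/f, t], ft+g) \otimes_{k[u]} k[u, u^{-1}]$, yielding a contracting homotopy there. The main obstacle is the sign and coefficient bookkeeping in Case 2, particularly tracking the extra $df$-contributions produced when $d$ hits $1/f^{\ell+1}$ inside $Dh(x)$; once these are absorbed via the Pascal identity above, no new idea is needed beyond the single telescoping identity.
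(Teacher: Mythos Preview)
Your proposal is correct and follows precisely the route the paper takes: the paper's proof reads in its entirety ``The second statement is immediate from the first, and the first follows from a direct calculation,'' and your outline is exactly that direct calculation, organized around the telescoping identity $(f+u\partial_t)\bigl(\sum_{\ell\ge 0}(-u)^\ell f^{-\ell-1}\partial_t^\ell(t^j)\bigr)=t^j$. The Case~1/Case~2 split, the observation that $D(M_2)\subseteq M_2$, and the Pascal-type identity handling the extra $df$-terms from $d(1/f^{\ell+1})$ in $Dh(x)$ are all correct and are what one finds upon actually carrying out the computation the paper leaves to the reader.
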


\begin{proof}
The second statement is immediate from the first, and the first follows from a direct calculation. 
\end{proof}

Consider the commutative square
\begin{equation}
\label{HNsquare}
\xymatrix{
HN^{dR}(A[t], ft + g) \ar[r] \ar[d]^-{\on{can}_1} & HN^{dR}(A, g) \ar[d]^-{\on{can}_2} \\
HN^{dR}(A[1/f, t], ft + g) \ar[r] &   HN^{dR}(A[1/f],g ),
}
\end{equation}
where $\on{can}_1$ and $\on{can}_2$ are the maps induced by inverting $f$,
and the horizontal maps are induced by setting $t = 0$. Note that
the square in Proposition \ref{keylemma} is obtained from \eqref{HNsquare} by inverting $u$.
The contracting homotopy $h$ of the bottom-left complex of \eqref{HNsquare} arising from Lemma \ref{contractible} induces the map $\sigma$
in the diagram
\begin{equation} \label{induced}
\xymatrix{
\fiber(\on{can}_1) \ar[r] \ar[d] & \fiber(\on{can}_2) \ar[d]\\
HN^{dR}(A[t], ft + g) \ar[r] \ar[d]^-{\on{can}_1} \ar[ru]^-{\sigma} & HN^{dR}(A, g) \ar[d]^-{\on{can}_2}  \\
HN^{dR}(A[1/f, t], ft + g) \ar[r] &   HN^{dR}(A[1/f],g ),}
\end{equation}
causing both triangles to commute. 

\begin{lem}
\label{anothertechlem}
We have a commutative diagram
$$
\xymatrix{
&&HN^{dR}_{A[t]/k}(A[t], ft + g) \ar[d]^-{\cong} \ar[dr]^-{\sigma}&\\
&& \on{fiber}(\nabla) \ar[d] \ar[r]& \fiber(\on{can}_2) \ar[d] \\
0 \ar[r] & M[-2] \ar[r]^-t \ar[d]^-{=}& M \ar[d]^{\nabla} \ar[r]^-{t = 0} & HN^{dR}(A, g)\ar[d]^-{\on{can_2}} \ar[r] & 0 \\
0 \ar[r] & M[-2] \ar[r]^-{\nabla \cdot t}  &  M\ar[r]^-{\varphi } & HN^{dR}(A[1/f],g ), \\
  }
$$
where the top-most vertical map is induced by Proposition \ref{cone}, and the bottom two rows are exact. 
\end{lem}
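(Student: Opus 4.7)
The plan is to verify the diagram one piece at a time. There are essentially four ingredients: the top chain isomorphism, the exactness of the two bottom rows, the map $\sigma$, and the commutativity of the constituent squares and triangles.

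The top chain isomorphism $HN^{dR}(A[t], ft+g) \cong \fiber(\n)$ is provided directly by Lemma~\ref{cone}. For the middle row, I would observe that $M = \Omega^\bullet_{A/k}[t,u]$ admits the short exact sequence $0 \to tM \hookrightarrow M \to M/tM \to 0$, that multiplication by the degree-two element $t$ gives a chain isomorphism $M[-2] \cong tM$, and that the quotient $M/tM$, whose residual differential is $dg + ud$ (since the $tdf$ summand vanishes modulo $t$), is canonically identified with $HN^{dR}(A,g)$. The bottom row is precisely the exact sequence from Lemma~\ref{philemma}(3).

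Next, I would construct the map $\sigma$ and the unlabeled horizontal arrow $\fiber(\n) \to \fiber(\on{can}_2)$ in row 2. The map $\sigma$ is the standard map to a fiber induced by a strictly commutative square with a contractible corner: starting from the square~\eqref{HNsquare} and using the null-homotopy $h$ of $HN^{dR}(A[1/f,t], ft+g)$ from Lemma~\ref{contractible}, one obtains the explicit formula $\sigma(\omega) = (\omega|_{t=0},\; \pm(t=0)(h(\on{can}_1(\omega))))$ under a fixed decomposition of the underlying graded module of $\fiber(\on{can}_2)$ as $HN^{dR}(A,g) \oplus HN^{dR}(A[1/f],g)[-1]$. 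The horizontal map in row 2 is then defined to be the map on fibers induced by the two bottom squares, once those are shown to commute.

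Finally, I would check commutativity square by square. The squares between rows 2 and 3 are the defining fiber projections to the target of each fiber's defining map. The left square between rows 3 and 4 is the tautology $\n \circ t = \n \cdot t$. The right square between rows 3 and 4 reduces to the identity $\varphi \circ \n = 0$ from Lemma~\ref{philemma}(2), combined with the compatibility---implicit in the $\Omega^\bullet_{A/k}[u]$-linearity of $\varphi$---between setting $t=0$ in the middle row and the action of $\on{can}_2$ in the right column. The top triangles then reduce to matching the induced map $\fiber(\n) \to \fiber(\on{can}_2)$ with $\sigma$, after precomposition with the isomorphism from Lemma~\ref{cone}. This final compatibility is the main obstacle: it is essentially a bookkeeping calculation, but pinning down the explicit form of the chain isomorphism in Lemma~\ref{cone}, the formula for $\varphi$, and the formula for $h$ requires careful attention to signs and degree shifts.
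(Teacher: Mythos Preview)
Your plan is the same as the paper's: the paper's proof says row 3 is evidently exact, row 4 is Lemma~\ref{philemma}(3), and ``a direct calculation shows that the diagram commutes.'' You have simply spelled out more of the scaffolding.

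One point needs correcting. Commutativity of the lower-right square is the equality $\on{can}_2 \circ (t{=}0) = \varphi \circ \nabla$, and this does \emph{not} follow from Lemma~\ref{philemma}(2). Indeed, if $\varphi \circ \nabla$ were identically zero then the square could commute only if $\on{can}_2 \circ (t{=}0)$ vanished as well, which it does not (it sends the constant $1$ to $1$). What one actually checks from the explicit formulas is that $\varphi(\nabla(t^i)) = 0$ for $i \ge 1$ while $\varphi(\nabla(1)) = -1$, so $\varphi \circ \nabla$ agrees with $-\on{can}_2 \circ (t{=}0)$ and the square commutes up to an overall sign. (This same computation shows that Lemma~\ref{philemma}(2), read literally, has a sign slip at $t^0$; none of this affects the argument, since only exactness and quasi-isomorphisms are at stake.) The verification is still the routine one you anticipate---just not via the decomposition into ``$\varphi\circ\nabla=0$ plus a compatibility'' that you propose.
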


\begin{proof}
The exactness of the third row is clear, and the exactness of the fourth row follows from Lemma~\ref{philemma}(3). A direct calculation shows that the diagram commutes. 
\end{proof}

\begin{proof}[Proof of Proposition \ref{keylemma}]
The statement concerning the contractibility of $HP^{dR}(A[1/f, t], ft + g)$ follows from Lemma \ref{contractible}. Inverting $u$ in the diagram from Lemma \ref{anothertechlem} gives a commutative diagram 
$$
\xymatrix{
&&HP^{dR}_{A[t]/k}(A[t], ft + g) \ar[d]^-{\cong} \ar[dr]^-{\sigma}&\\
&& \on{fiber}(\nabla) \ar[d] \ar[r]& \fiber(\on{can}_2) \ar[d] \\
0 \ar[r] & (M[u^{-1}])[-2] \ar[r]^-t \ar[d]^-{=}& M[u^{-1}] \ar[d]^{\nabla} \ar[r]^-{t = 0} & HP^{dR}(A, g)\ar[d]^-{\on{can_2}} \ar[r] & 0 \\
0 \ar[r] & (M[u^{-1}])[-2] \ar[r]^-{\nabla \cdot t}  &  M[u^{-1}] \ar[r]^-{\varphi } & HP^{dR}(A[1/f],g ) \ar[r] & 0. \\
  }
$$
Notice that the bottom row is now a short exact sequence, by Lemma \ref{philemma}(4). A diagram chase shows that $\sigma$ is a quasi-isomorphism.

Finally, we consider the commutative diagram
\begin{equation} \label{bmap3}
\xymatrix{
\fiber(\on{can}_1) \ar[r]^\simeq \ar[d]^\simeq & \fiber(\on{can}_2) \ar[d]\\
HP^{dR}(A[t], ft + g) \ar[r] \ar[d]^-{\on{can}_1} \ar[ru]^-{\sigma}_{\simeq} & HP^{dR}(A, g) \ar[d]^-{\on{can}_2}  \\
HP^{dR}(A[1/f, t], ft + g) \ar[r] &   HP^{dR}(A[1/f],g )
}
\end{equation}
obtained from \eqref{induced} by inverting $u$. 
Since $HP^{dR}(A[1/f, t], ft + g)$ is contractible, the upper-left vertical map is a quasi-isomorphism as shown.
We just proved that $\sigma$ is a quasi-isomorphism, and thus so too is the top horizontal map.
This implies that the bottom square is homotopy cartesian.
\end{proof}

\section{Proof of Theorem \ref{devissage}}

We first address the affine case of Theorem \ref{devissage}. For convenience, we introduce the following
\begin{terminology}
We say a dg-functor $\cC \to \cD$ is an \emph{HP-equivalence} if it induces a quasi-isomorphism on periodic cyclic complexes. 
\end{terminology}

Let $k$ be a characteristic 0 field, $Q$ an essentially smooth $k$-algebra, $f_1, \dots, f_c \in Q$, and $Z = V(f_1, \dots, f_c) \subseteq \Spec(Q)$.
Let $K$ denote the Koszul complex on $f_1,\dots, f_c$, and set $R = Q/(f_1, \dots, f_c)$.
Note that the canonical ring map $Q \onto R$ factors as $Q \into K \onto R$;
these maps induce dg-functors $\Dbdg(R) \to \Dbdg(K) \to \DbdgZ(Q)$ given by restriction of scalars.

\begin{thm}
\label{affine}
With the notation just introduced:
\begin{enumerate}

\item The dg-functor $\Dbdg(K) \to \DbdgZ(Q)$ is an HP-equivalence. 

\item The dg-functor $\Dbdg(R) \to \Dbdg(K)$   is an HP-equivalence if and only if $\Dbdg(R) \to \DbdgZ(Q)$ is such.

 \item If $f_1, \dots, f_c$ form a regular sequence, then the dg functor $\Dbdg(R) \to \DbdgZ(Q)$ is an HP-equivalence.

\end{enumerate}
\end{thm}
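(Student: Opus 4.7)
Parts (2) and (3) will follow quickly from (1). Given (1), the functor $\Dbdg(R) \to \DbdgZ(Q)$ factors as $\Dbdg(R) \to \Dbdg(K) \to \DbdgZ(Q)$ with the second arrow being an HP-equivalence, so two-out-of-three yields (2). For (3), when $f_1, \ldots, f_c$ is regular the augmentation $K \to R$ is a quasi-isomorphism of dg-algebras, and hence restriction of scalars gives a quasi-equivalence $\Dbdg(R) \to \Dbdg(K)$ and therefore an HP-equivalence; then (2) gives (3).

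The plan for (1) is to induct on $c$, with the case $c = 0$ being trivial. For the inductive step, I will exhibit both $HP(\Dbdg(K))$ and $HP(\DbdgZ(Q))$ as the fiber of one and the same canonical map between de Rham models. Set $\wQ' = Q[t_2, \ldots, t_c]$ with $|t_i|=2$, $\wf' = \sum_{i \geq 2} f_i t_i$, $Z' = V(f_2, \ldots, f_c)$, and $U_1 = D(f_1)$, so that $\wQ = \wQ'[t_1]$ and $\wf = f_1 t_1 + \wf'$, and note that $\wQ'$ satisfies the graded-smoothness hypotheses of Notation \ref{notation}.

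On the $\Dbdg(K)$ side, the Koszul duality diagram (Proposition \ref{koszulmf}) together with the HKR-type Theorem \ref{HKR} yields a natural quasi-isomorphism $HP(\Dbdg(K)) \simeq HP^{dR}(\wQ, \wf)$. Applying Proposition \ref{keylemma} with $A = \wQ'$, $f = f_1$, $g = \wf'$ then identifies this with the fiber of
$$
HP^{dR}(\wQ', \wf') \to HP^{dR}(\wQ'[1/f_1], \wf'),
$$
since the bottom-left entry of the homotopy cartesian square is contractible. On the $\DbdgZ(Q)$ side, Proposition \ref{gabrieldg} applied with $Y = Z'$, $Z = V(f_1)$, $U = U_1$ produces a short exact sequence of dg-categories; since HP is localizing (Theorem \ref{kellerthm}), this presents $HP(\DbdgZ(Q))$ as the fiber of the pullback map $HP(\D^{\on{b}, Z'}_{\on{dg}}(Q)) \to HP(\D^{\on{b}, Z'}_{\on{dg}}(U_1))$. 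The inductive hypothesis, applied to $f_2, \ldots, f_c$ over both $Q$ and $Q[1/f_1]$ (both essentially smooth over $k$), combined with Koszul duality and HKR once more, identifies these two terms with $HP^{dR}(\wQ', \wf')$ and $HP^{dR}(\wQ'[1/f_1], \wf')$ respectively.

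The main obstacle will be to verify that the resulting two identifications of fibers agree with the map on $HP$ induced by the forgetful functor $\Dbdg(K) \to \DbdgZ(Q)$. This is a naturality check: one must trace through the Koszul duality diagram of Proposition \ref{koszulmf}, the HKR quasi-isomorphism of Theorem \ref{HKR}, the localization triangle associated to $U_1 \hookrightarrow \Spec(Q)$, and the identification furnished by Proposition \ref{keylemma}, and check that they assemble into a commutative diagram of dg-$k[u,u^{-1}]$-modules over the common base $HP^{dR}(\wQ', \wf') \to HP^{dR}(\wQ'[1/f_1], \wf')$. I expect this bookkeeping, rather than any conceptual difficulty, to be the bulk of the remaining work.
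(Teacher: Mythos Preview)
Your proposal is correct and follows essentially the same approach as the paper: induction on $c$, Koszul duality plus HKR to pass to the de Rham model, Proposition~\ref{keylemma} for the inner comparison, and the localization sequence from Proposition~\ref{gabrieldg} for the outer one. The paper organizes the inductive step as a single commutative cube with corners $HP(mf(\wQ,\wf))$, $HP(mf(\wQ',\wf'))$, $HP(\DbdgZ(Q))$, $HP(\D^{\on{b},Z'}_{\on{dg}}(Q))$ and their localizations, then deduces that the relevant trapezoid is homotopy cartesian from the other three faces; this packaging makes your ``naturality check'' automatic, since the cube commutes by functoriality of $HP$ applied to the diagram in Proposition~\ref{koszulmf}.
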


\begin{proof}
  Part (2) follows immediately  from (1).  When $f_1, \dots, f_c$ is a regular sequence,
  the map $K \to R$ is a quasi-isomorphism and thus (3) follows from  (2).

 Let us now prove (1). Let $\wQ$ and $\wf$ be as in Subsection~\ref{koszulduality}. 
To prove (1), we argue by induction on $c$. Suppose $c = 1$, and write $f_1$ as just $f$. We have a commutative diagram \begin{equation}
\label{HPdiagram}
\xymatrix{
HP(mf(Q[t], ft)) \ar[r] \ar[d] &  HP(\DbdgZ(Q)) \ar[r] \ar[d] & HP(\Dbdg(Q)) \ar[d] \\
HP(mf(Q[1/f, t], ft))\ar[r] &  HP(\DbdgZ(Q[1/f]))\ar[r] & HP(\Dbdg(Q[1/f])).
}
\end{equation}
Each map in \eqref{HPdiagram} is induced by a dg-functor: the left-most horizontal maps are induced by setting $t = 0$, the right-most horizontal maps are induced by inclusions,
and the vertical maps are induced by inverting $f$. By Proposition \ref{koszulmf}, it suffices to show that the upper-left map is a quasi-isomorphism. 

We first observe that, by Theorem \ref{HKR}, the outer rectangle in \eqref{HPdiagram} is quasi-isomorphic to 
$$
\xymatrix{
HP^{dR}(Q[t], ft) \ar[r] \ar[d] & HP^{dR}(Q, 0) \ar[d] \\
HP^{dR}(Q[1/ft], ft) \ar[r] & HP^{dR}(Q[1/f], 0), \\
}
$$
which is homotopy cartesian by Proposition \ref{keylemma} (take $g = 0$ in that statement). The right-most square in \eqref{HPdiagram} is homotopy cartesian by Proposition~\ref{gabrieldg}, Theorem~\ref{kellerthm}, and the observation that $\DbdgZ(Q[1/f])$ is exactly the subcategory of contractible objects in $\Dbdg(Q[1/f])$. It follows that the left-most square in \eqref{HPdiagram} is also homotopy cartesian.  
The complex $HP(\DbdgZ(Q[1/f]))$ is exact since $\DbdgZ(Q[1/f])$ is quasi-equivalent to $0$,
and $HP(mf(Q[1/f, t], ft))$ is exact by Lemma \ref{contractible} and Theorem \ref{HKR}.
It follows that the top-left map in \eqref{HPdiagram} is a quasi-isomorphism; this proves the $c = 1$ case. 





Now suppose $c > 1$. For the same reasons as in the $c=1$ case, the complexes $HP(mf(\wQ[1/f_c], \wf))$ and $HP(\DbdgZ(Q[1/f_c]))$ are contractible.
It therefore suffices, by Proposition \ref{koszulmf}, to show that the square
\begin{equation}
\label{mainsquare}
\xymatrix{
HP(mf(\wQ, \wf)) \ar[r] \ar[d] & HP(\DbdgZ(Q)) \ar[d] \\
HP(mf(\wQ[1/f_c], \wf)) \ar[r] & HP(\DbdgZ(Q[1/f_c]))
}
\end{equation}
is homotopy cartesian. Let $\wQ' = Q[t_1, \dots, t_{c-1}]$, $\wf' = f_1t_1 + \cdots + f_{c-1}t_{c-1} \in \wQ'$, and $Z' = V(f_1, \dots, f_{c-1})$. We have the following commutative diagram
\begin{footnotesize}
$$
\xymatrix{
 HP(\DbdgZ(Q)) \ar[rrr] \ar[ddd] &&& HP(\D^{\on{b},Z'}_{\on{dg}}(Q)) \ar[ddd] \\
 &HP(mf(\wQ, \wf)) \ar[r] \ar[d] \ar[lu] & HP(mf(\wQ', \wf'))  \ar[d] \ar[ru] &\\
&HP(mf(\wQ[1/f_c], \wf))  \ar[r] \ar[ld] & HP(mf(\wQ'[1/f_c], \wf')) \ar[rd] &  \\
HP(\DbdgZ(Q[1/f_c]))  \ar[rrr] &&&  HP(\D^{\on{b},Z'}_{\on{dg}}(Q[1/f_c])),
}
$$
\end{footnotesize}

\noindent where the vertical maps are induced by inverting $f_c$,
the exterior horizontal maps are induced by inclusion, and every other map is given by sending one or more of the $t_i$ to 0. Observe that the square \eqref{mainsquare} is the
left-most trapezoid in this diagram. The diagonal arrows in the right-most trapezoid are quasi-equivalences by induction and Proposition \ref{koszulmf}; it follows that this
trapezoid is homotopy cartesian. The exterior square is homotopy cartesian by Proposition~\ref{gabrieldg}, Theorem~\ref{kellerthm}, and the observation that $\DbdgZ(Q[1/f_c])$ is exactly the subcategory of contractible objects in $\D^{\on{b},Z'}_{\on{dg}}(Q[1/f_c])$. The interior square is homotopy cartesian by Proposition \ref{keylemma}
and Theorem \ref{HKR}. By a diagram chase similar to the argument in the proof of Lemma~\ref{NatDistLem}(2), it follows that \eqref{mainsquare} is homotopy cartesian. This proves (1). 
\end{proof}

\begin{proof}[Proof of Theorem \ref{devissage}]
  Since $X$ is noetherian, we have $X = Y_1 \cup \cdots \cup Y_n$ with each $Y_i$ an affine open  subscheme of $X$ such that $Z \cap Y_i \into Y_i$ is lci. Each $Y_i$ is smooth
  since $X$ is. 

  We proceed by induction on $n$; the case $n = 1$ is the content of Theorem \ref{affine}(3). For $n \geq 2$, we have $X = U \cup V$, where we set $U \coloneqq Y_1 \cup \cdots \cup Y_{n-1}$ and $V \coloneqq Y_n$.
This gives a commutative diagram
\begin{footnotesize}
$$
\xymatrix{
 HP(\DbdgZ(X)) \ar[rrr] \ar[ddd] & & & HP(\DbdgU) \ar[ddd] \\
& HP(\Dbdg(Z)) \ar[r] \ar[d] \ar[lu] & HP(\Dbdg(Z \cap U)) \ar[d] \ar[ru]^\simeq & \\
& HP(\Dbdg(Z \cap V))  \ar[ld]_\simeq  \ar[r] & HP(\Dbdg(Z \cap U \cap V))\ar[rd]^\simeq  &  \\
 HP(\DbdgV) \ar[rrr] & & & HP(\DbdgUV). \\
}
$$
\end{footnotesize}

\noindent in which the diagonal maps are induced  by pushforward, and all other maps are induced by pullback.
By induction on $n$, the lower-left and upper-right diagonal maps are quasi-isomorphisms as indicated.
Observe that $U \cap V = (Y_1 \cap Y_n) \cup \cdots \cup (Y_{n-1} \cap Y_{n})$. Since $X$ is separated, each $Y_i \cap Y_n$ is affine, and the inclusion $Z \cap Y_i \cap Y_n \into  Y_i \cap Y_n$ is lci, for all $i$.
This proves the lower-right diagonal map is also a quasi-isomorphism as indicated. Finally, by Corollary \ref{MVcor}, the interior and exterior squares are homotopy cartesian. It follows that 
$HP(\Dbdg(Z)) \to HP(\DbdgZ(X))$ is a quasi-isomorphism.
\end{proof}

\begin{rem}
\label{tabuada}
When $Z$ is smooth, Theorem \ref{devissage} follows easily from the Hochschild-Kostant-Rosenberg Theorem and the Gysin long exact sequence in de Rham cohomology \cite[Section 2, Theorem 3.3]{hartshorne}; see also \cite[Example 1.15]{TVdB}. Since the Gysin sequence is not available when $Z$ is not smooth, this approach does not work in our setting. 

Similarly, the proof of Tabuada-Van den Bergh's result \cite[Theorem 1.8]{TVdB}, which states that devissage holds for localizing $\A^1$-homotopy invariants in the case of a closed embedding of a smooth scheme $Z$ into a smooth scheme $X$, does not extend to give a proof of Theorem~\ref{devissage}. One reason for this is that \cite[Theorem 6.8(ii)]{TVdB}, which plays a key role in the proof of \cite[Theorem 1.8]{TVdB}, does not extend to our setting. In more detail: \cite[Theorem 6.8(ii)]{TVdB} states that, if $R \to S$ is a surjective morphism of smooth $k$-algebras, then $\mathbb{R}\Hom_R(S, S)$ is a formal dga. To adapt Tabuada-Van den Bergh's argument to prove Theorem~\ref{devissage}, one would need a version of this result in the case where $S$ is assumed only to be a complete intersection. But this is simply false; for instance, when $R = k[x]$ and $S = k[x]/(x^2)$, it is straightforward to check that $\mathbb{R}\Hom_R(S, S)$ is not a formal dga. 
\end{rem}

\begin{rem}
Let $Q$, $R$, and $K$ be as in Theorem \ref{affine}. If we knew that the canonical map $HP(\Dbdg(R)) \to HP(\Dbdg(K))$ is a quasi-isomorphism for any (not necessarily regular) sequence $f_1, \dots, f_c \in Q$, the above Mayer-Vietoris argument would give a proof of Theorem \ref{devissage} without the lci assumption (but still assuming $X$ is smooth).
\end{rem}

Theorem \ref{devissage} admits a slight generalization:

\begin{cor}
\label{generalization}
Let $Z \into X$ and $X \into Y$ be closed embeddings. If d\'evissage for periodic cyclic homology holds for the embeddings $X \into Y$, $Z \into Y$, and $X \setminus Z \into Y
\setminus Z$; then it also holds for the embedding $Z \into X$. In particular, if $X \into Y$ is lci, $Z \into Y$ is lci, and $Y$ is smooth, then d\'evissage holds for $Z \into X$. 
\end{cor}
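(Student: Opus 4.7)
The plan is to reduce devissage for $Z \into X$ to the three devissage statements assumed in the hypothesis by comparing two localization sequences via pushforward along $X \into Y$, and then running a standard five-lemma / two-out-of-three argument for localizing invariants. The key observation is that the closed immersion $X \into Y$ is compatible with the decomposition of $Z \subseteq X$ and of $Z \subseteq Y$ (viewed as closed subschemes with the complementary open $\,\setminus Z$), so that pushforward yields a ladder of short exact sequences whose middle and right vertical arrows are precisely the devissage comparisons assumed to be quasi-isomorphisms.

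First I would apply Proposition \ref{gabrieldg} twice to produce the short exact sequences of dg-categories
$$
\DbdgZ(X) \to \Dbdg(X) \to \Dbdg(X \setminus Z)
$$
and
$$
\DbdgZ(Y) \to \D^{\on{b}, X}_{\on{dg}}(Y) \to \D^{\on{b}, X \setminus Z}_{\on{dg}}(Y \setminus Z),
$$
linked by pushforward along $X \into Y$; this pushforward preserves support on $Z$, and its restriction over the open subscheme $Y \setminus Z$ is precisely pushforward along the closed immersion $X \setminus Z \into Y \setminus Z$. Next I would apply $HP$ and invoke Theorem \ref{kellerthm} together with Lemma \ref{NatDistLem}(1) to obtain a morphism of distinguished triangles in the derived category of mixed complexes. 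By hypothesis, the middle vertical map (devissage for $X \into Y$) and the right vertical map (devissage for $X \setminus Z \into Y \setminus Z$) are quasi-isomorphisms, so the five-lemma forces the left vertical map $HP(\DbdgZ(X)) \to HP(\DbdgZ(Y))$ to be a quasi-isomorphism as well. Functoriality of pushforward along the composite $Z \into X \into Y$ then yields a factorization
$$
HP(\Dbdg(Z)) \xra{i_*} HP(\DbdgZ(X)) \to HP(\DbdgZ(Y))
$$
whose composite is the devissage map for $Z \into Y$, and hence a quasi-isomorphism by hypothesis; two-out-of-three then delivers the desired conclusion that $i_*$ is a quasi-isomorphism, i.e., devissage for $Z \into X$.

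For the ``in particular'' clause, I would simply note that the lci condition is local, so $X \setminus Z \into Y \setminus Z$ remains lci, and $Y \setminus Z$ is smooth as an open subscheme of the smooth scheme $Y$; Theorem \ref{devissage} therefore supplies devissage for each of the three embeddings $X \into Y$, $Z \into Y$, and $X \setminus Z \into Y \setminus Z$, putting us in the situation of the first part. There is no serious obstacle in this argument — it is essentially a formal diagram chase among localizing invariants — the only point that requires mild care is checking that pushforward along $X \into Y$ genuinely induces the required map on each column of the ladder and that the resulting morphism of distinguished triangles is natural enough for Lemma \ref{NatDistLem}(1) to apply.
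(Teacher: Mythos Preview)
Your proposal is correct and follows essentially the same route as the paper: both arguments use Proposition~\ref{gabrieldg} to produce the same pair of localization sequences linked by pushforward along $X \into Y$, apply Lemma~\ref{NatDistLem} (together with the five-lemma/two-out-of-three for distinguished triangles) to deduce that $HP(\DbdgZ(X)) \to HP(\DbdgZ(Y))$ is a quasi-isomorphism, and then conclude via the factorization of the devissage map for $Z \into Y$ through $\DbdgZ(X)$. The ``in particular'' clause is handled identically as well.
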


\begin{proof}
Consider the diagram
$$
\xymatrix{
  \Dbdg(Z) \ar[r] \ar@/^2.0pc/[rrr]^{\text{$HP$-eq}}
  & \D_{\on{dg}}^{\on{b}, Z}(X)\ar[rr] \ar[d] && \ar[d] \D_{\on{dg}}^{\on{b}, Z}(Y) \\
&\Dbdg(X) \ar[rr]^{\text{$HP$-eq}} \ar[d] && \D_{\on{dg}}^{\on{b}, X}(Y) \ar[d] \\
&\Dbdg(X \setminus Z) \ar[rr]^{\text{$HP$-eq}} && \D_{\on{dg}}^{\on{b}, X \setminus Z}(Y \setminus Z),
}
$$
in which all horiztonal maps are induced by pushforward, the two upper vertical maps are inclusions, and the bottom vertical maps are induced by 
pullback.
The curved arrow and  the bottow two  horizontal arrows  are $HP$-equivalences as indicated, by assumption.  Since the two columns are short exact sequences of
dg-categories by Proposition \ref{gabrieldg}, it follows from Theorem \ref{kellerthm} and Lemma \ref{NatDistLem} that the top-right horizontal arrow is also an $HP$-equivalence.
It follows that $\Dbdg(Z) \to \DbdgZ(X)$ is an HP-equivalence. The final assertion  follows by using Theorem \ref{devissage}. 
\end{proof}

\begin{ex}
  Suppose $X$ is a $k$-scheme that can be embedded via an lci closed embedding into a scheme that is smooth over $k$. 
  By Corollary \ref{generalization}, d\'evissage for periodic cyclic homology holds for any closed embedding $Z \into X$ provided $Z$ is smooth over $k$. (This follows from the Corollary
  since every closed embedding of smooth schemes is lci.)   For instance, d\'evissage holds for the inclusion of any point into $X$. 
\end{ex}

\section{The boundary map in a localization sequence on periodic cyclic homology}
Let $Q$ be an essentially smooth $k$-algebra, $f \in Q$
a non-zero-divisor, and $R = Q/f$. 

\subsection{Computing the boundary map}
Theorems~\ref{devissage} and~\ref{kellerthm} give a two-periodic long exact sequence 
\begin{equation} \label{bmap2}
\cdots \to HP_j(Q) \to  HP_j(Q[1/f]) \xra{\del_j} HP_{j-1}(\Dbdg(R)) \to HP_{j-1}(Q) \to \cdots.
\end{equation}
The goal of this subsection is to give an explicit formula for the boundary map $\del_j$. To achieve this, we use the de Rham versions of these complexes provided by Proposition~\ref{koszulmf} and Theorem \ref{HKR}, i.e. the isomorphisms 
\begin{equation} \label{bmap1}
  HP_*(\Dbdg(R)) \cong HP^{dR}_*(Q[t], ft) \and HP_*(Q[1/f]) \cong HP_*^{dR}(Q[1/f]),
\end{equation}
where the right-hand sides are defined as in Notation \ref{notation}.

\begin{lem} \label{lem:cycledescription} With $Q$ and $f$ as above, every class in $HP_j^{\dR}(Q[1/f])$ is represented by a sum of cycles of the form
  $\frac{\a}{f^s} u^{l}$  for $s,l \in \Z$ with $s \geq 0$ and  $\a \in \Omega^{2l+j}_Q$ satisfying $f d\a = s df \a$.
\end{lem}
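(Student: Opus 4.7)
The plan is to unpack definitions. By Notation \ref{notation} with $S = Q[1/f]$ and $h = 0$, we have $HP^{\dR}(Q[1/f]) = (\Omega^\bu_{Q[1/f]/k}[u,u^{-1}], u\,d)$, where $d$ is the de Rham differential and $u$ sits in cohomological degree $2$. A cycle in (cohomological) degree $-j$ is a finite Laurent sum $z = \sum_l \omega_l\, u^l$ with $\omega_l \in \Omega^{2l+j}_{Q[1/f]/k}$. Since the differential $u\,d$ raises $u$-degree by one and forms at distinct powers of $u$ live in independent summands, the cycle condition $(u\,d)(z) = 0$ is equivalent to $d\omega_l = 0$ in $\Omega^{2l+j+1}_{Q[1/f]/k}$ for every $l$.

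Next, I would use that $\Omega^n_{Q[1/f]/k} = Q[1/f] \otimes_Q \Omega^n_{Q/k}$, so each closed form $\omega_l$ may be written as $\omega_l = \a_l/f^{s_l}$ with $\a_l \in \Omega^{2l+j}_{Q/k}$ and $s_l \geq 0$. The Leibniz rule gives
$$
d\omega_l \;=\; \frac{f\, d\a_l \;-\; s_l\, df \wedge \a_l}{f^{s_l+1}} \;\in\; \Omega^{2l+j+1}_{Q[1/f]/k},
$$
and by the cycle condition this vanishes in the localization.

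The key input, and the one nontrivial step, is that $f$ acts as a non-zero-divisor on each $\Omega^n_{Q/k}$: since $Q$ is essentially smooth over $k$, $\Omega^n_{Q/k}$ is a projective (in particular flat) $Q$-module, and $f$ is a non-zero-divisor on $Q$, so the canonical map $\Omega^n_{Q/k} \to \Omega^n_{Q/k}[1/f]$ is injective. Consequently the vanishing of $d\omega_l$ in the localization lifts to the identity $f\, d\a_l = s_l\, df \wedge \a_l$ on the nose in $\Omega^{2l+j+1}_{Q/k}$. With this equation in hand, each summand $\frac{\a_l}{f^{s_l}} u^l$ is itself a cycle in $HP^{\dR}(Q[1/f])$, and $z$ is exhibited as a sum of cycles of the stated form. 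I do not anticipate any serious obstacle; the only point requiring care is the injectivity of $\Omega^n_{Q/k} \hookrightarrow \Omega^n_{Q[1/f]/k}$, which is handled by the projectivity afforded by essential smoothness.
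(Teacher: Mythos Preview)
Your proof is correct and follows essentially the same approach as the paper's: both unpack the definition to reduce to closed forms in $\Omega^*_{Q[1/f]}$, write each as $\alpha/f^s$ via $\Omega^*_{Q[1/f]} \cong \Omega^*_Q[1/f]$, and read off the relation $f\,d\alpha = s\,df\wedge\alpha$ from closedness. Your argument is slightly more explicit than the paper's in that you spell out why the relation lifts from the localization back to $\Omega^*_{Q/k}$ (via projectivity of $\Omega^n_{Q/k}$ and $f$ being a non-zero-divisor), a point the paper leaves implicit.
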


\begin{proof}
We have $HP^{\dR}(Q[1/f]) = \bigoplus_{p,l}
\Omega_{Q[1/f]/k}^p u^l$, with $\Omega_{Q[1/f]/k}^p u^l$ in homological degree $p-2l$, and
differential  $u d$. There is an isomorphism
$
 \bigoplus_m H_{\dR}^{2m+j}(Q[1/f]) \cong HP^{\dR}_j(Q[1/f]),
$
where $H_{\dR}^*( - )$ refers to classical de Rham cohomology, that
sends the class of a closed form $\omega \in
\Omega^{2m+j}_{Q[1/f]}$ to the class of $\omega u^m$.  
Using the identification $\Omega^*_{Q[1/f]} \cong \Omega^*_Q[1/f]$, it
follows that a cycle in $HP^{\dR}(Q[1/f])$ of homological degree $j$  
is a finite sum of elements of the form $\frac{\a}{f^s} u^l$, with $\a \in \Omega^{2l+j}_Q$, each of which is a cycle satisfying $f d\a = s df \a$.
\end{proof}

\begin{rem}
\label{rem:technical}
The condition $fd\alpha = sdf\alpha$ in Lemma~\ref{lem:cycledescription} implies that $f df d\a = 0$, and hence, since $f$ is a non-zero-divisor, that $df d\a = 0$. 
\end{rem}

\begin{thm} \label{thm:boundarymap} Under the isomorphisms in \eqref{bmap1}, the boundary map $\del_j$ in \eqref{bmap2} corresponds to the map
  $
\del_j^{\dR}:  HP^{\dR}_j(Q[1/f]) \to HP^{\dR}_{j-1}(Q[t], ft)
  $
that sends a class $\frac{\a}{f^s} u^{l}$ as in Lemma \ref{lem:cycledescription} to
$
\frac{(-1)^{s}}{s!} d(\a t^s) u^{l+1-s}.
$
\end{thm}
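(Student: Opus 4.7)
The plan is to trace the definition of the boundary map in Lemma~\ref{DistLem} through the chain of quasi-equivalences linking the localization sequence of dg-categories $\DbdgZ(Q) \to \Dbdg(Q) \to \Dbdg(Q[1/f])$ of Proposition~\ref{gabrieldg} to its de Rham model. After applying devissage (Theorem~\ref{affine}), Koszul duality (Proposition~\ref{koszulmf}), and the HKR-type Theorem~\ref{HKR}, this localization sequence becomes the fiber sequence
\[
HP^{dR}(Q[t], ft) \xra{t=0} HP^{dR}(Q, 0) \xra{\can_2} HP^{dR}(Q[1/f], 0)
\]
produced in the proof of Proposition~\ref{keylemma} (with $A = Q$ and $g = 0$) from the contractibility of $HP^{dR}(Q[1/f, t], ft)$ (Lemma~\ref{contractible}). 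Consequently, $\del_j^{dR}$ is the connecting map of this fiber sequence, and the aim is to compute it by unwinding the explicit quasi-isomorphism $\sigma \colon HP^{dR}(Q[t], ft) \xra{\simeq} \fiber(\can_2)$ constructed in Lemma~\ref{anothertechlem}.

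After inverting $u$ in the lower row of that lemma, one has the short exact sequence
\[
0 \to M[u^{-1}][-2] \xra{\nabla \cdot t} M[u^{-1}] \xra{\varphi} HP^{dR}(Q[1/f], 0) \to 0
\]
(Lemma~\ref{philemma}(4)), together with the identification $HP^{dR}(Q[t], ft) \simeq \fiber(\nabla)$ from Lemma~\ref{cone}. Combining these, the connecting map sends a cycle $[z] \in HP^{dR}(Q[1/f], 0)$ to the class in $\fiber(\nabla)$ represented by a pair $(x, y) \in M[u^{-1}] \oplus M[u^{-1}][-1]$ with $x|_{t=0} = 0$, $\varphi(y) = z$, $d_M x = 0$, and $\nabla(x) = d_M y$ (up to the signs dictated by the conventions).

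Given $z = \tfrac{\alpha}{f^s} u^l$ with $f\, d\alpha = s\, df \wedge \alpha$ as in Lemma~\ref{lem:cycledescription}, the Ansatz $y := \tfrac{(-1)^s \alpha}{(s-1)!} t^{s-1} u^{l-s+1}$ satisfies $\varphi(y) = z$ by direct substitution into the defining formula for $\varphi$. Using the relation $f\, d\alpha = s\, df \wedge \alpha$ to replace $df \wedge \alpha$ in $d_M y = t\, df \wedge y + u\, d_Q y$, a short calculation yields
\[
d_M y = \tfrac{(-1)^s d\alpha}{s!}\, \nabla(t^s)\, u^{l-s+1} = \nabla\!\left(\tfrac{(-1)^s d\alpha}{s!}\, t^s u^{l-s+1}\right),
\]
so one takes $x := \tfrac{(-1)^s d\alpha}{s!}\, t^s u^{l-s+1}$. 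The requirement $d_M x = 0$ reduces to $df \wedge d\alpha = 0$, which holds by Remark~\ref{rem:technical}, and $x|_{t = 0} = 0$ because $s \geq 1$ (the degenerate case $s = 0$ forces $d\alpha = 0$, so both sides of the claimed formula vanish). Finally, the isomorphism of Lemma~\ref{cone} identifies $(x, y)$ with $x + y\, dt \in \Omega^\bullet_{Q[t]/k}[u, u^{-1}] = HP^{dR}(Q[t], ft)$; invoking the Leibniz rule $d(\alpha t^s) = d\alpha \cdot t^s + (-1)^{|\alpha|} s\, \alpha\, t^{s-1}\, dt$ and collecting terms then yields the desired expression $\tfrac{(-1)^s}{s!}\, d(\alpha t^s)\, u^{l+1-s}$.

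The principal obstacle is the careful bookkeeping of sign conventions in the last step, since signs enter from four sources: the formula for $\varphi$ in Lemma~\ref{philemma} (via $(-1)^{i+1} i!$), the Leibniz rule on $\Omega^\bullet_{Q[t]/k}$ (via $(-1)^{|\alpha|}$), the isomorphism of Lemma~\ref{cone} (via the factor $dt$ shifting degree by one), and the standard formula for the connecting map of a fiber sequence. A secondary subtlety is that the cycle $(x, y)$ representing the boundary class in $\fiber(\nabla)$ is determined only modulo $d$-exact chains, so one may need to replace it with a cohomologous representative to match the target formula exactly rather than only cohomologically.
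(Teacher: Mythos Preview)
Your approach is correct and closely parallels the paper's, but you run the computation in the opposite direction. The paper starts from the candidate $\gamma = \tfrac{(-1)^s}{s!} d(\alpha t^s)\,u^{l+1-s}$, verifies directly that it is a cycle, and then applies $\sigma$ \emph{forward} using the explicit formula $\sigma = \begin{bmatrix}\tau \\ -\overline{h}\end{bmatrix}$, where $\overline{h}$ is the contracting homotopy coming from Lemma~\ref{contractible}; a one-line computation of $\overline{h}(\gamma)$ then gives $\sigma(\gamma) = (0,\,\tfrac{\alpha}{f^s}u^l) = \del_j(\tfrac{\alpha}{f^s}u^l)$. You instead exploit the \emph{other} description of $\sigma$ supplied by Lemma~\ref{anothertechlem}, namely as the composite $HP^{dR}(Q[t],ft)\cong\fiber(\nabla)\to\fiber(\can_2)$ with the second map induced by $(t{=}0,\varphi)$, and you construct the preimage $(x,y)$ of $(0,z)$ by a snake-lemma lift through the short exact sequence of Lemma~\ref{philemma}(4). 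Both routes use the same diagram; the paper's forward computation via $\overline{h}$ is slightly quicker because it avoids guessing the lift $y$, while your route makes the role of the exact sequence $0\to M[u^{-1}][-2]\xra{\nabla\cdot t}M[u^{-1}]\xra{\varphi}HP^{dR}(Q[1/f])\to 0$ more transparent. Your caveat about signs is exactly right: the identification of Lemma~\ref{cone} and the passage between $-\overline{h}$ and $\varphi$ each introduce a factor of $(-1)^{|\omega_2|}$, and these must be tracked to land on the stated formula rather than merely a cohomologous one.
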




\begin{proof} If $s = 0$, then this class lifts to an element of $HP_j^{dR}(Q)$ and hence is mapped to zero via $\del^{dR}$; henceforth, assume $s \geq 1$.  The element $\gamma = \gamma(s, \a) \coloneqq \frac{(-1)^{s}}{s!} d(\a t^s) u^{l+1-s} $ has degree $j-1$; let us check that $\gamma \in HP^{\dR}(Q[t], ft)$ is a cycle. We have $f d\a = s df \a$, and Remark~\ref{rem:technical} implies that $df d\a = 0$. We now compute:
\begin{align*}
  (u d + d(ft))(d\a t^s)  &= (-1)^{j+1} s d\a  t^{s-1} dt u + df d\a t^{s+1} + (-1)^{j+1}f d\a t^s dt \\
  &= (-1)^{j+1} \left( s d\a  t^{s-1} dt u + s df \a t^s dt\right). 
\end{align*}
Finally, to conclude that $\gamma$ is a cycle, we observe that $(ud + d(ft)) (s\a t^{s-1} dt) = s d\a t^{s-1} dt u + s df \a t^s dt$.

Consider diagram \eqref{bmap3} with $A = Q$ and $g = 0$. The complex $\fiber(\can_2)$ in that diagram is the graded $k[u, u^{-1}]$-module
$HP^{\dR}(Q) \oplus   HP^{\dR}(Q[1/f])[1]$ equipped with the differential $\begin{bmatrix} -d_{HP^{\dR}(Q)} & 0 \\ -\can_2 & d_{HP^{\dR}(Q[1/f])} \end{bmatrix}$,
and the boundary map $\del_j: HP^{\dR}_j(Q[1/f]) \to H_{j-1}(\fiber(\can_2))$ is induced by inclusion into the second summand. The map 
$\del^{\dR}_j$ is given by the composition $H_{j-1}(\sigma)^{-1}\del_j$, where $\sigma$ is as in diagram~\eqref{bmap3}. It therefore suffices to show that $H_{j-1}(\sigma)(\gamma) = \del_j\left(\frac{\a}{f^s}   u^{l}\right)$. 

Let $\tau: HP^{\dR}(Q[t], ft) \to HP^{\dR}(Q)$ be the map induced by setting $t = 0$. The quasi-isomorphism $\s$ is induced by a contracting homotopy $\overline{h}$ of $\can_2 \circ \tau: HP^{\dR}(Q[t], ft) \to H^{\dR}(Q[1/f])$;
specifically, $\s = \begin{bmatrix} \tau \\ -\overline{h} \end{bmatrix}$. 
The homotopy $\overline{h}$ is induced by the contracting homotopy $h$ given in Lemma \ref{contractible} and the bottom commutative square in \eqref{bmap3} and is thus given
by 
$$
\overline{h}\left(\omega_1 t^a + \omega_2 t^b dt\right) =
(-1)^{|\omega_2| +b} b! \frac{\omega_2 u^b}{f^{b+1}} 
$$
for $\omega_1, \omega_2 \in \Omega^\bu_{Q}$ and integers $a,b \geq 0$. 
It follows that 
$$
H_{j-1}(\s)(\g) =
\begin{bmatrix} \tau(\g) \\ -\overline{h}(\g) 
  \end{bmatrix} 
  =
  \begin{bmatrix} 0 \\ -\frac{(-1)^{s}}{s!} (-1)^j (-1)^{2l+j+s-1} (s-1)! \frac{s \a u^{s-1}}{f^s} u^{l+1-s} \end{bmatrix} \\
   = \del_{j}\left(\frac{\a}{f^s}   u^{l}\right).
$$
  \end{proof}


  \subsection{Relationship with Chern characters of matrix factorizations}
  In this subsection, we illustrate our explicit formula for the boundary map in~\eqref{bmap2} by showing it is compatible with the Chern character map for $K_1$.  For simplicity, we assume $Q$ is local
  (and essentially smooth over $k$) throughout this subsection. We will use Theorem~\ref{thm:boundarymap} to directly check that the square
    $$
    \xymatrix{
      K_1(Q[1/f]) \ar[r]^{\del_1} \ar[d]^{ch^{HP}_1} & G_0(Q/f) \ar[d]^{ch^{HP}_0} \\
      HP_1(Q[1/f]) \ar[r]^{\del_1} & HP_0(\Dbdg(Q/f)) \\
    }
    $$
    commutes, where $ch^{HP}_*$ denotes the $HP$-Chern character map, and the horizontal maps are the boundary maps in the canonical long exact sequences. 
    Using Proposition~\ref{koszulmf} and Theorem \ref{HKR}, we may identify this square with
    \begin{equation} \label{E923}
    \xymatrix{
      K_1(Q[1/f]) \ar[r]^{\del_1} \ar[d]^{ch^{\dR}_1} & G_0(Q/f) \ar[d]^{ch_0^{\dR}}  \\
      HP^{\dR}_1(Q[1/f]) \ar[r]^{\del_1^{\dR}} & HP^{\dR}_0(Q[t], ft), \\
    }
  \end{equation}
  where the maps $ch_*^{\dR}$ denote the de Rham versions of the Chern character maps $ch^{HP}_*$.
    Let us recall the formulas for the maps in this square.

  Since $Q$ and $Q[1/f]$ are regular, the long exact sequence in $G$-theory gives an exact sequence
  \begin{equation} \label{E912c}
  \cdots \to K_1(Q) \to K_1(Q[1/f]) \xra{\del_1} G_0(Q/f) \to K_0(Q) \to K_0(Q[1/f]) \to 0.
\end{equation}
The map $K_0(Q) \to K_0(Q[1/f])$ is injective, since $Q$ is local. Moreover, as $K_1(Q)$ is isomorphic to the group of units $Q^\times$ in $Q$, the boundary map induces an isomorphism $\frac{K_1(Q[1/f])}{Q^\times} \xra{\cong} G_0(Q/f)$.
The group $G_0(Q/f)$ is generated by the classes of maximal Cohen-Macaulay $Q/f$-modules. Given such a module $M$, it has projective dimension~$1$ as a $Q$-module, and thus there exists an exact sequence of the form
$$
0 \to Q^n \xra{A} Q^n \to M \to 0
$$
for some $n \times n$ matrix $A$ with entries in $Q$. Since multiplication by $f$ on $M$ is zero, there is a unique $n \times n$ matrix $B$ with entries in $Q$ such that $AB = BA = f \cdot I_n$; that is, $(A,B)$ forms a matrix factorization of
$f$. 
By \cite[Theorem III.3.2]{Kbook}, we have $\del_1([A]) = [\coker(A)] = [M]$. In particular, we need only check that the square~\eqref{E923} commutes on classes of the form $[A] \in K_1(Q[1/f])$, where $(A, B)$ is a matrix factorization of $f$. 

%



For any essentially smooth $k$-algebra $S$, the Chern character map
  $$
  ch_1^{\dR}: K_1(S) \to HP_1(S) \cong HP^{\dR}_1(S)
  $$
is given by
  $$
  ch_1^{\dR}(T) \coloneqq 
\sum_{s \geq 1} (-1)^{s+1}\frac{2 s!}{(2s)!} \tr(T^{-1} dT (dT^{-1} dT)^{s-1}) u^{s-1} 
$$
for any $T \in GL(S)$ \cite[Section 1]{pekonen}; here, we use the relation $(T^{-1}dT)^2 = -dT^{-1}dT$.\footnote{Our formula for $ch_1$ differs from the one found in \cite[Section 1]{pekonen} by the constant $\frac{i^{3s-2}}{(2\pi)^s}$.}
Applying this formula when $S = Q[1/f]$ and $T= A$,  where $(A,B)$ is a matrix factorization of $f \in Q$,
and using  \cite[Lemma 5.7]{BW1} along with the relation $dA^{-1} = f^{-1} dB - f^{-2} df B$, we obtain
\begin{equation} \label{E49b}
  ch^{\dR}_1(A) = \sum_{s \geq 1} (-1)^{s+1} \frac{2 s!}{(2s)!} f^{-s} \tr(B dA (dB dA)^{s-1}) u^{s-1} \in HP_1^{\dR}(Q[1/f]).
  \end{equation}
A similar calculation shows
  $$
f \tr((dB dA)^s) = s \cdot df \smsh \tr(B dA (dB dA)^{s-1})
$$
for each $s$. We now apply Theorem \ref{thm:boundarymap} to get
$$
 \del_1^{\dR}(ch_1(A)) = -\sum_{s \geq 1} \frac{2}{(2s)!}  \tr(d(BdA(dAdB)^{s-1}) t^s),
$$
which coincides with $ch^{\dR}_0([M])$ by \cite[Example 6.4]{BW}. This shows that \eqref{E923} commutes.

\section{Proof of Theorem \ref{latticeci}}

\begin{prop} \label{latticeprop}
Let $Z \into X$ be a closed embedding of $\C$-schemes, where $X$ is smooth.  
\begin{enumerate}
\item The Lattice Conjecture (Conjecture \ref{latticeconj}) holds for $\DbdgZ(X)$.
\item The following are equivalent:
\begin{enumerate}
\item The map $HP(\Dbdg(Z) )\to HP(\DbdgZ(X))$ induced by pushforward is a quasi-isomorphism.
\item The Lattice Conjecture holds for the dg-bounded derived category $\Dbdg(Z)$.
\item The Lattice Conjecture holds for the dg-singularity category $\D^{\sing}_{\on{dg}}(Z)$.
\end{enumerate}
\end{enumerate}
\end{prop}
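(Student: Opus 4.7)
The plan is to work throughout with the \emph{fiber} of the complexified topological Chern character, $F := \fiber(ch\colon K^{\on{top}}_\C \to HP)$, which is itself a localizing invariant by Lemma \ref{FiberLem}. By construction, the Lattice Conjecture for a dg-category $\cC$ is precisely the statement that $F(\cC) \simeq 0$. The strategy is to apply $F$ to several short exact sequences of pre-triangulated dg-categories produced by Proposition \ref{gabrieldg}, and to repeatedly invoke Blanc's theorem (item (1) in the introduction), which gives the Lattice Conjecture—hence the vanishing of $F$—on $\Perfdg(W)$ for any separated finite-type $\C$-scheme $W$.

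For part (1), consider the localization sequence $\DbdgZ(X) \to \Dbdg(X) \to \Dbdg(X \setminus Z)$. Since $X$ is smooth, so is $X \setminus Z$, and hence $\Dbdg \simeq \Perfdg$ on both; Blanc's theorem then forces $F$ to vanish on the two outer terms, and the distinguished triangle of Lemma \ref{DistLem} yields $F(\DbdgZ(X)) \simeq 0$. The equivalence (b) $\Leftrightarrow$ (c) in part (2) comes from the same strategy applied to the Verdier localization $\Perfdg(Z) \to \Dbdg(Z) \to \D^{\sing}_{\on{dg}}(Z)$: since $F(\Perfdg(Z)) \simeq 0$ by Blanc, the induced distinguished triangle produces a quasi-isomorphism $F(\Dbdg(Z)) \simeq F(\D^{\sing}_{\on{dg}}(Z))$, so one is contractible exactly when the other is.

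The remaining equivalence (a) $\Leftrightarrow$ (b) is the technical heart of the argument. The natural approach is to exploit the commutative square
$$
\xymatrix{
K^{\on{top}}_\C(\Dbdg(Z)) \ar[r]^{K^{\on{top}}_\C(i_*)} \ar[d]_{ch_Z} & K^{\on{top}}_\C(\DbdgZ(X)) \ar[d]^{\simeq} \\
HP(\Dbdg(Z)) \ar[r]^{HP(i_*)} & HP(\DbdgZ(X)),
}
$$
whose right-hand vertical is a quasi-isomorphism by part (1). If one can show that the top horizontal arrow is also always a quasi-isomorphism, then the two-out-of-three property in the derived category immediately yields that $ch_Z$ is a quasi-isomorphism (i.e., (b) holds) precisely when $HP(i_*)$ is (i.e., (a) holds). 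The main obstacle is thus to establish that $K^{\on{top}}_\C(i_*)$ is always a quasi-isomorphism. I would attempt this by combining Quillen's classical d\'evissage theorem for $G$-theory, which gives that $i_*$ induces an equivalence of non-connective algebraic K-theory spectra $K(\Dbdg(Z)) \xrightarrow{\simeq} K(\DbdgZ(X))$, with the functorial construction of $K^{\on{top}}_\C$ from algebraic K-theory via topological realization, Bott inversion, and complexification, each of which preserves quasi-isomorphisms of spectra. Verifying this last step carefully, without any smoothness hypothesis on $Z$, is what I would expect to be the principal difficulty.
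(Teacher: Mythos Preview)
Your overall architecture matches the paper's proof exactly: work with the fiber $F$ of the complexified Chern character, use the localization sequence $\DbdgZ(X) \to \Dbdg(X) \to \Dbdg(X\setminus Z)$ together with Blanc's theorem for part (1), use the defining sequence of the singularity category for (b)$\Leftrightarrow$(c), and for (a)$\Leftrightarrow$(b) use the commutative square comparing $ch$ on $\Dbdg(Z)$ and on $\DbdgZ(X)$, reducing to the claim that $K^{\on{top}}_\C(i_*)$ is always a quasi-isomorphism.

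The one substantive divergence is how you handle that last claim. The paper does not attempt to derive d\'evissage for $K^{\on{top}}_\C$ from Quillen's algebraic d\'evissage; it simply cites \cite[Example~2.3]{HLP}. Your proposed derivation has a genuine gap: Blanc's $K^{\on{top}}$ is not obtained by applying operations to the single spectrum $K(\cC)$, but rather by first forming the presheaf of spectra $\Spec(R)\mapsto K(\cC\otimes_\C R)$ on affine $\C$-schemes, then applying topological realization, and only then inverting Bott and complexifying. Knowing that $K(\Dbdg(Z))\to K(\DbdgZ(X))$ is an equivalence does not tell you that $K(\Dbdg(Z)\otimes_\C R)\to K(\DbdgZ(X)\otimes_\C R)$ is an equivalence for all test algebras $R$ (these tensor products of dg-categories are not $\Dbdg$ of a base change in general), so the equivalence does not automatically propagate through the realization step. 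Equivalently, in the language of noncommutative motives, algebraic $K$-theory is corepresented by the unit, so $K(i_*)$ being an isomorphism only says that $\mathcal U(i_*)$ is an isomorphism after mapping out of the unit---not that $\mathcal U(i_*)$ itself is an isomorphism, which is what you would need to conclude the same for every localizing invariant. You correctly flagged this step as the principal difficulty, but the specific mechanism you sketch does not close it; one really needs an independent argument for d\'evissage in $K^{\on{top}}$, as in \cite{HLP}.
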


\begin{proof}
  Let $E = \fiber(ch: K^{\on{top}}_\C \to HP)$, and note that the Lattice Conjecture holds for a dg-category $\cA$ if and only if $E(\cA)$ is exact.
Moreover, $E$ is localizing by Theorems \ref{kellerthm} and \ref{KtopThm}, Lemma \ref{FiberLem}, and the naturality of $ch$ \cite[Theorem 4.24]{blanc}.
In particular, the first assertion is equivalent to the assertion that $E_{\coh}^Z(X)$ is exact (see Notation~\ref{formaldef}). 
  Since $X$ and $X \setminus Z$ are smooth, and $E$ is localizing,  the canonical maps $E(X) \xra{\simeq} E_{\coh}(X)$ and
  $E(X \setminus Z) \xra{\simeq} E_{\coh}(X \setminus Z)$ are equivalences. Since the Lattice Conjecture is known for perfect complexes of separated schemes of finite type over
  $\C$,   we conclude that both $E_{\coh}(X)$ and $E_{\coh}(X \setminus Z)$ are exact. 
  The first assertion thus follows from Proposition \ref{gabrieldg} and Lemma \ref{DistLem}.

  As for (2), we recall that the map $K^{\on{top}}_\C(\Dbdg(Z)) \to K^{\on{top}}_\C(\DbdgZ(X))$ induced by pushforward along $Z \into X$
  is known to be an equivalence by \cite[Example 2.3]{HLP}. Using the naturality of $ch$, 
  it follows from (1) that (a) and (b) are equivalent.
By the definition of the dg-singularity category, we have a short exact sequence 
$
\Perfdg(Z) \to \Dbdg(Z) \to \D^{\sing}_{\on{dg}}(Z)
$
of pre-triangulated dg-categories. Since $E(Z)$ is exact, the equivalence of (b) and (c) follows from  Lemma \ref{DistLem}. 
  \end{proof}

\begin{proof}[Proof of Theorem \ref{latticeci}] 
  Let $E$ be the fiber of the Chern character map as in the proof of Proposition \ref{latticeprop}, so that the goal is to show $E(\Dbdg(X))$ and
$E(\D^{\sing}_{\on{dg}}(X))$ are exact. Since $X$ is noetherian, the assumptions give a  cover
  $X = U_1 \cup \cdots \cup U_n$ of $X$ by affine open subschemes such that each $U_i$ admits an lci embedding into a smooth $\C$-scheme.
By Theorem \ref{devissage} and   Proposition \ref{latticeprop}(2), $E_{\coh}(U_i)$ is exact for all $i$. 
Just as in the proof of Theorem \ref{devissage},  since $E$ is localizing, by induction on $n$ we conclude that $E_{\coh}(X)$ is exact.
Using  Proposition~\ref{latticeprop} again, we have that $E(\D^{\sing}_{\on{dg}}(X))$ is also  exact. 
\end{proof}

\begin{rem}
As discussed in the introduction, Khan has subsequently generalized Theorem~\ref{latticeci}: see \cite[Theorem B]{khan}. His result follows from a devissage statement \cite[Theorem A.2]{khan} by essentially the same argument as the one we give here. Additional new cases of the Lattice Conjecture for bounded derived categories and singularity categories of Gorenstein dg-algebras have also recently been obtained by the first author and Sridhar~\cite{BSlattice}.
\end{rem}

\bibliographystyle{amsalpha}
\bibliography{Bibliography}

\end{document}